\theoremstyle{plain}
\newtheorem{problem}{Problem}
\newtheorem{theorem}{Theorem}
\theoremstyle{remark}
\newtheorem*{remark}{Remark}
\theoremstyle{definition}
\newtheorem{lemma}[theorem]{Lemma}
\newtheorem{corollary}[theorem]{Corollary}
\numberwithin{theorem}{section}
\newcommand{\OO}{\Omega}
\newcommand{\bff}{\boldsymbol{f}}
\newcommand{\bfn}{\boldsymbol{n}}
\newcommand{\bfu}{\boldsymbol{u}}
\newcommand{\bft}{\boldsymbol{t}}
\newcommand{\bfv}{\boldsymbol{v}}
\newcommand{\bfw}{\boldsymbol{w}}
\newcommand{\zero}{\boldsymbol{0}}
\newcommand{\ve}{\boldsymbol{\varepsilon}}
\newcommand{\II}{\textup{I}}
\newcommand{\PP}{\mathbb{P}}
\newcommand{\Gt}{\Gamma_S}
\newcommand{\GD}{\Gamma_D}
\newcommand{\bfH}{\boldsymbol{H}}
\newcommand{\trih}{\mathcal{T}_h}
\newcommand{\Eh}{\mathcal{E}_h}
\newcommand{\EE}{\mathcal{E}}
\newcommand{\sumkh}{\sum_{K\in\trih}}
\newcommand{\bfuh}{\boldsymbol{u}_h}
\newcommand{\bfvh}{\boldsymbol{v}_h}
\newcommand{\ph}{p_h}
\newcommand{\qh}{q_h}
\newcommand{\bfh}{\boldsymbol{h}}
\newcommand{\eu}{{e}^{\bfuh}}
\newcommand{\ep}{e^{\ph}}
\newcommand{\tbfu}{\bfu^*}
\newcommand{\tp}{p^*}
\newcommand{\etau}{{\eta}^{\bfuh}}
\newcommand{\etap}{\eta^{\ph}}
\newcommand{\nhh}[1]{|\!|\!| #1 |\!|\!|_h }	
\newcommand{\RR}{\mathbb{R}}
\newcommand{\NN}{\mathbb{N}}
\newcommand{\hK}{h_K}
\newcommand{\Cinv}{C_{\mathrm{in}}}
\newcommand{\Ctr}{C_{\mathrm{tr}}}
\newcommand{\bfIh}{\boldsymbol{I}_h}
\newcommand{\tbfIh}{\boldsymbol{\tilde I}_h}
\newcommand{\fch}[1]{{\color{black}{#1}}}
\newcommand{\alfonso}[1]{{\color{black}{#1}}}
\newcommand{\revision}[1]{{\color{black}{#1}}}
\numberwithin{equation}{section}
\begin{document}
\title[Stokes problem with slip boundary conditions]{Stokes problem with slip boundary conditions \\ using stabilized finite elements combined with Nitsche}
\author{Rodolfo Araya}
\address{Departamento de Ingenier{\'\i}a Matem\'atica \& CI2MA, Universidad de Concepci\'on, Concepci\'on, Chile}
\email{rodolfo.araya@udec.cl}
\author{Alfonso Caiazzo}
\address{Weierstrass Institute for Applied Analysis and Stochastics (WIAS), Mohrenstrasse 39, 10117 Berlin, Germany}
\email{alfonso.caiazzo@wias-berlin.de}
\author{Franz Chouly}
\address{University of the Republic, Faculty of Science, Center of Mathematics, 11400 Montevideo, Uruguay}
%
\email{fchouly@cmat.edu.uy}

\begin{abstract}
\fch{We discuss} how
slip conditions for the Stokes equation can be handled using Nitsche method, for a stabilized finite element discretization. Emphasis is made on the interplay between stabilization and Nitsche terms. Well-posedness of the discrete problem and optimal convergence rates, \revision{in natural norm for the velocity and the pressure}, are established, and illustrated with various numerical experiments. \revision{The proposed method fits naturally 
in the context of a finite element implementation while being accurate, and allows an increased flexibility in the choice of the finite element pairs.}
\end{abstract}


\maketitle
\section{Introduction}\label{S0}

Slip boundary conditions arise naturally for Stokes or Navier-Stokes equations, \fch{for instance when modelling biological surfaces \cite{B00}, in slide coating \cite{CS89} or in \alfonso{the context of} turbulence modeling \cite{MP94}.}
These are essential boundary conditions, and can be 
in fact considered as generalized Dirichlet conditions. 
These conditions are not straightforward to implement into standard finite element libraries, with the canonical techniques such as a discrete lifting or a partitioning of the global matrix \revision{especially for boundaries with variations of the unit outward normal vector}.
This challenge motivated several research works to study alternative approaches,
\revision{often based on penalty techniques or a mixed formulation. However, the penalty methods can yield inaccurate results or a poor conditioning if the penalty parameter is not chosen with care, and the mixed formulation can be cumbersome in this context, combined with the saddle-point structure already coming from the Stokes equations.}

This work presents a simple approach based on Nitsche's technique combined with a stabilized equal-order finite element method. 
To simplify the presentation, we focus on the Stokes equation on a polygonal boundary and without any specific law that involve the tangential components of the velocity, such as a Navier law. 
We consider both symmetric and non-symmetric variants of Nitsche, since they have different advantages, particularly to enforce accurately the boundary condition, \fch{see, \emph{e.g.},
\cite{C22,H18} and references therein}.
\alfonso{The stability analysis relies on the introduction of proper stabilization terms.} \fch{Notably, we are able to prove the stability with an inf-sup constant independent of the fluid viscosity.} The overall method is consistent, introduces no extra unknown and can be implemented easily. 
\alfonso{To assess the properties of the method},
we propose an implementation in the FEniCS environment \cite{ABHJKLRRRW15} and present several numerical experiments
\revision{that illustrate the simplicity, the flexibility and the accuracy of the method.}

Let us frame our work in \alfonso{a more general} perspective. The first methods to enforce slip conditions were based on Lagrange multipliers: see, e.g.,  \cite{L99,V87,V91}.
In \cite{BD99} the condition was enforced pointwise at nodal values of the velocity.
Many studies have been devoted to the study of penalty methods, to enforce approximately the slip condition with a regularization term. These methods are not consistent, but remain \alfonso{popular} 
and very easy to implement. 
\fch{Moreover, penalty can be interpreted as a penetration condition with a given resistance \cite{J02}.}
A first work has been focused on the Navier-Stokes equation \cite{CL09}, and followed by \cite{DTU13,DU15}, with emphasis on the case of a curved boundary, where a Babu\v{s}ka-type paradox may appear.
Other recent works have been devoted to the usage of penalty terms combined with Lagrange finite elements \cite{KOZ16,ZKO16,ZKO17} or Crouzeix-Raviart finite elements \cite{KOZ19,ZOK21}.
To our knowledge, Nitsche's method has been first considered in \cite{FS95}, as a simple, consistent and primal technique to take into account the slip condition. Notably, it has been noticed that the skew-symmetric variant of Nitsche remains operational even when the Nitsche parameter vanishes (\textit{penalty-free} variants), a result which opened the path to further research on this topic \cite{BCCLM18,BB16,B12}
 \revision{(see also \cite{BBH2009} for incompressible elasticity)}.
Later on, in \cite{UGF14}, different variants of Nitsche have been proposed and linked, as usual, with stabilized mixed methods (following \cite{S95}). Emphasis has been once again made on the curved boundary and a possibly related Babuska-type paradox.
\alfonso{More recently}, a specific treatment of the Navier boundary condition has been studied in \cite{WSMW18}, building on the specific Nitsche-type method proposed by Juntunen \& Stenberg \cite{JS09} to discretize robustly Robin-type boundary conditions (see also \cite{ZLR21}), and a symmetric Nitsche method with specific, accurate, discretization of the curved boundary, has been designed and studied in \cite{GS22}. \revision{Alternatively, an approach based on a specific design of wavelet functions has been proposed in \cite{harouna2021}.
In a recent work \cite{bansal2023nitsche}, a symmetric Nitsche method is combined with inf-sup stable pairs and a Variational MultiScale (VMS) stabilization for the Navier-Stokes equations. In \cite{gustafsson2023stabilised} a stabilized mixed method for a nonlinear slip condition is described.}
\alfonso{In conclusion, we observe} that almost all the aforementioned works have considered inf-sup stable pairs to discretize the Stokes equation, except \cite{KOZ16} where
\alfonso{the penalty method combined with 
a $\mathbb{P}_1/\mathbb{P}_1$ finite element pair with pressure stabilization is taken into account.}

This paper is structured as follows. \fch{Section \ref{sectmodel} describes the model equations in strong form. The weak formulation and the corresponding functional setting is object of Section \ref{S1}. Section \ref{S2} presents the discretization with finite elements, stabilization and Nitsche. Section \ref{S3} details the stability and convergence analysis. Numerical experiments are provided in Section \ref{S4}.
} 


\section{Model problem} \label{sectmodel}
Let $\Omega\subset\RR^{d}$, $d\in\{2,\,3\}$, be an open, bounded domain with Lipschitz continuous boundary  $\partial\Omega$. We  use standard notation for 
Lebesgue spaces $L^q(\OO)$, with norm $\|\cdot\|_{0,q,\OO}$, for $q>2$, and $\|\cdot\|_{0,\OO}$ for $q=2$ and inner product $(\cdot,\cdot)_\Omega$, and Sobolev spaces $H^m(\OO)$,  with norm $\|\cdot\|_{m,\OO}$ and semi-norm $|\cdot|_{m,\OO}$.
\fch{The boundary $\partial\Omega$ is partitionned into a subset $\Gamma_D$, where a Dirichlet boundary condition is imposed, with $\mathrm{meas}\, (\Gamma_D)>0$, and 
a subset $\Gamma_S$, where the slip condition is enforced.
Moreover, we denote with $\bfn$ the outer normal vector to $\Gamma_S$, and with $\bft_i,\; 1\leq i\leq d-1$ the orthonormal vectors spanning the plane tangent to $\Gamma_S$.}


We 
\alfonso{consider} the Stokes equations \alfonso{seeking for} a velocity field $\bfu : \OO \to \RR^d$ and a pressure field $p: \OO \to \RR$ solutions to
\begin{equation}\label{eq:stokes-strong}
\;\left\{
\begin{array}{rl}
-\nabla\cdot\sigma(\bfu,p) & = \bff \quad \textrm{ in } \Omega,    \\
\nabla \cdot \bfu & = 0 \quad\, \textrm{ in } \Omega,\\
\bfu &= \revision{\bfh} \quad \textrm{ on } \Gamma_D,\\
\bfu\cdot\bfn &= \revision{g} \quad\, \textrm{ on } \Gamma_S,\\
\sigma(\bfu,p)\bfn\cdot\bft_i &= s_i,\quad 1\leq i\leq d-1 \quad \textrm{ on } \Gamma_S.
\end{array}
\right.
\end{equation}
\alfonso{In \eqref{eq:stokes-strong},} \fch{the stress tensor is expressed as}
$$
\sigma(\bfu,p) := 2\nu\ve(\bfu)-p\II,
$$
the parameter $\nu > 0$ denotes the fluid viscosity,  $\ve(\bfu):= \frac{1}{2}(\nabla\bfu +\nabla\bfu^T)$
 stands for the symmetric part of \revision{the rate of deformation tensor $\nabla \bfu$}, $\bff \in L^2(\OO)^d$ is a given source term
 and $s_i\in L^2(\Gamma_S)$.

\section{Continuous variational formulation}\label{S1}
We define the Hilbert spaces
\begin{align*}
\bfH &:= \{\bfv\in H^1(\OO)^d\;:\;  \bfv = \zero\quad\mbox{on}\; \Gamma_D,\;\bfv\cdot\bfn = 0\quad\mbox{on}\; \Gamma_S\;\},\\
Q &:= L^2_0(\OO),
\end{align*}
\alfonso{with their natural inner products, and consider the product space $\bfH\times Q$ equipped 
with the norm
\[
\|(\bfv,q)\|^2 := \nu\|\ve(\bfv)\|^2_{0,\OO} + \|q\|^2_{0,\OO},\qquad \forall (\bfv,q)\in \bfH\times Q.
\]
We then introduce the bilinear forms 
\begin{equation*}
a: \bfH\times\bfH\to\RR, \; a(\bfu,\bfv) := 2\nu (\ve(\bfu),\ve(\bfv))_{\OO}\qquad \forall \bfu,\,\bfv\in \bfH,\\
\end{equation*}
and
\begin{equation*}
b:\bfH\times Q\to\RR, \; b(\bfv,q) := -(\nabla\cdot\bfv, q)_{\OO} \qquad \forall (\bfv,q)\in\bfH\times Q,
\end{equation*}
and consider the following variational formulation associated with problem \eqref{eq:stokes-strong}:} 
\begin{problem} \label{pbweak}
\noindent Find $(\bfu,p)\in \bfH\times Q$ such that
\begin{equation}\label{variational}
B((\bfu,p),(\bfv,q)) = F(\bfv,q),
\end{equation}
for all $(\bfv,q) \in \bfH\times Q$, where
\begin{equation}\label{eq:b-form}
B((\bfu,p),(\bfv,q)) := a(\bfu,\bfv) + b(\bfv,p) - b(\bfu,q)
\end{equation}
and
\[
F(\bfv,q):=  (\bff,\bfv)_{\OO}  + \sum_{i=1}^{d-1} \int_{\Gamma_S} s_i\,\bfv\cdot\bft_i\, \mathrm{d}s.
\]
\end{problem}

\noindent
Well-posedness of Problem~\ref{pbweak}
is stated below:

\begin{theorem}\label{well_cont}
Problem~\ref{pbweak} 
has a unique solution $(\bfu,p)\in \bfH\times Q$, and there exists a positive constant $C$
such that
\[
\|(\bfu,p)\| \leq C\left\{ \|\bff\|_{0,\OO} + \sum_{i=1}^{d-1} \|s_i\|_{0,\Gamma_S}\right\}.
\]
\end{theorem}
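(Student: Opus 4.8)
The plan is to recognize Problem~\ref{pbweak} as a saddle-point problem with the block structure \eqref{eq:b-form} and to invoke the classical Babu\v{s}ka--Brezzi theory. Since the essential boundary data are homogeneous (they are already encoded in the definition of $\bfH$), no lifting is required, so existence, uniqueness and the stated stability bound follow once the standard hypotheses are checked. First I would record the elementary continuity facts: $a$ and $b$ are bounded on $\bfH\times\bfH$ and $\bfH\times Q$ by Cauchy--Schwarz (using $|\ddiv\bfv|\le\sqrt{d}\,|\ve(\bfv)|$ pointwise to control $b$), while $F$ is bounded on $\bfH\times Q$ because $\int_{\Gt} s_i\,\bfv\cdot\bft_i\,\mathrm{d}s\le\|s_i\|_{0,\Gt}\|\bfv\|_{0,\Gt}$ is controlled via the trace inequality and Korn's inequality. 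All of these introduce at most powers of $\nu$, so the final constant $C$ is allowed to depend on $\nu$, $\OO$, $\GD$, $\Gt$ and $d$.

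The first Brezzi condition is coercivity of $a$ on the kernel of $b$. Here that kernel is $\{\bfv\in\bfH : \ddiv\bfv = 0\}$: testing $b(\bfv,q)=0$ against all $q\in L^2_0(\OO)$ forces $\ddiv\bfv$ to be constant, while the boundary conditions defining $\bfH$ give $\int_\OO\ddiv\bfv=\int_{\partial\OO}\bfv\cdot\bfn=0$, so that constant vanishes. On this kernel $a(\bfv,\bfv)=2\nu\|\ve(\bfv)\|^2_{0,\OO}$, which is exactly twice the squared velocity norm $\nu\|\ve(\bfv)\|^2_{0,\OO}$, so coercivity in the chosen norm is immediate \emph{provided} $\|\ve(\cdot)\|_{0,\OO}$ is a genuine norm on $\bfH$. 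This is where Korn's inequality enters: since $\mathrm{meas}(\GD)>0$ and $\bfv=\zero$ on $\GD$, the only rigid-body motion in $\bfH$ is the trivial one, and Korn's second inequality yields $\|\bfv\|_{1,\OO}\le C_K\|\ve(\bfv)\|_{0,\OO}$ for all $\bfv\in\bfH$.

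The second Brezzi condition is the inf--sup (LBB) condition: there is $\beta>0$ with $\sup_{\bfv\in\bfH\setminus\{\zero\}} b(\bfv,q)/(\sqrt{\nu}\,\|\ve(\bfv)\|_{0,\OO})\ge\beta\|q\|_{0,\OO}$ for all $q\in Q$. I would obtain this from the classical surjectivity of the divergence: given $q\in L^2_0(\OO)$ there exists $\bfw\in H^1_0(\OO)^d$ with $\ddiv\bfw=-q$ and $\|\bfw\|_{1,\OO}\le C_\OO\|q\|_{0,\OO}$, so that $b(\bfw,q)=\|q\|^2_{0,\OO}$. Because $H^1_0(\OO)^d\subset\bfH$ (a field vanishing on all of $\partial\OO$ trivially satisfies $\bfv=\zero$ on $\GD$ and $\bfv\cdot\bfn=0$ on $\Gt$), this same $\bfw$ is admissible in $\bfH$, and the supremum over the larger space $\bfH$ can only be larger; combined with the bound on $\|\bfw\|_{1,\OO}$ this delivers $\beta$ (of order $\nu^{-1/2}$).

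With continuity, coercivity on the kernel and the inf--sup condition in hand, the Babu\v{s}ka--Brezzi theorem gives a unique $(\bfu,p)\in\bfH\times Q$ solving \eqref{variational}, together with the a priori bound $\|(\bfu,p)\|\le C\,\|F\|_{(\bfH\times Q)'}$; the continuity of $F$ established above then turns this into the claimed estimate in terms of $\|\bff\|_{0,\OO}$ and $\sum_i\|s_i\|_{0,\Gt}$. The only genuinely nonelementary inputs are the two classical ingredients — Korn's inequality and the surjectivity of $\ddiv:H^1_0(\OO)^d\to L^2_0(\OO)$ — so I expect the main point requiring care to be verifying that the mixed slip/Dirichlet structure of $\bfH$ does not obstruct them; the inclusion $H^1_0(\OO)^d\subset\bfH$ and the hypothesis $\mathrm{meas}(\GD)>0$ are precisely what make both go through.
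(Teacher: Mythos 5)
Your proposal is correct and takes exactly the route the paper indicates: the paper's proof is a one-line appeal to the Babu\v{s}ka--Brezzi theory (with a reference to \cite{B04}), and you have simply fleshed out the standard ingredients --- continuity, Korn's inequality on $\bfH$ using $\mathrm{meas}(\GD)>0$, identification of the kernel of $b$, and the inf--sup condition via surjectivity of $\ddiv:H^1_0(\OO)^d\to L^2_0(\OO)$ together with the inclusion $H^1_0(\OO)^d\subset\bfH$. All of these steps are sound, including the observation that the constant may depend on $\nu$, which is consistent with the statement of the theorem.
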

\begin{proof}
The proof is a direct consequence of the Babu\v{s}ka--Brezzi theory. \fch{See also \cite{B04}.}
\end{proof}
\section{Discrete stabilized scheme}\label{S2}
In what follows, we denote by $\{\trih\}_{h>0}$ a regular family of triangulations of $\bar\OO$ composed by simplices. For a given triangulation  $\trih$, we will denote by $\Eh$ the set of all faces (edges) of $\trih$, with the partitioning 
$$
\Eh:= \EE_{\OO} \cup \EE_D\cup \EE_S,
$$
where $\EE_{\OO}$ stands for the faces (edges) lying in the interior of $\OO$,  $\EE_S $ stands for the faces (edges) lying on the boundary $\Gamma_S$, and  $\EE_D$ stands for the edges (faces) lying on the boundary $\Gamma_D$. 
Moreover, we will denote with $K$ \alfonso{a generic element of a} triangulation  $\trih$, with $\hK$  the diameter of $K$ and 
define $h :=\displaystyle \max_{K\in\trih} \hK$.

\noindent 
\alfonso{As next, for a given $l\geq 1$,} we introduce the following finite element spaces:
\begin{align*}
	\bfH_{h}  := & \left\{ \bfv \in C(\overline{\OO})^d  \;:\; \bfv|_K \in  \PP_l(K)^d,\quad \forall K\in\trih  \right\},\\
	Q_{h}      := & \left\{  q \in C(\overline{\OO})  \;:\; q|_K \in  \PP_l(K), \quad  \forall K\in\trih \right\} \cap Q,
\end{align*} 
where $\PP_l$ stands for the  space of polynomials of total degree less or equal than $l$.
\begin{remark}
Note that $Q_h$ is a subspace of $Q$, but $\bfH_h$ is not a subspace of $\bfH$. In that sense,  imposing weakly the 
(slip or Dirichlet) boundary conditions
using Nitsche, \fch{can be considered a non-conforming finite element method.} 
\end{remark}

\noindent
In the sequel we will need the following well known results:
\begin{lemma}\label{inverse}[Inverse inequality]
Let $\bfvh\in\bfH_h$ then for each $K\in \trih$; $l, m\in\NN$, with $0\leq m\leq l$, there exists a positive constant $\Cinv$, independent of $K$, such that
\[
|\bfvh|_{l,K} \leq \Cinv\, h^{m-l}_K |\bfvh|_{m,K}.
\] 
\end{lemma}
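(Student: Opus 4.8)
The plan is to prove the estimate on a fixed reference simplex and then transfer it to an arbitrary element $K$ by an affine change of variables, exploiting the shape-regularity of the family $\{\trih\}_{h>0}$. First I would fix a reference simplex $\widehat{K}$ and, for each $K\in\trih$, write the affine bijection $F_K:\widehat{K}\to K$, $F_K(\widehat{x})=B_K\widehat{x}+b_K$, with $B_K\in\RR^{d\times d}$ invertible. Setting $\widehat{\bfv}:=\bfvh\circ F_K$ and applying the chain rule together with the change of variables $\mathrm{d}x=|\det B_K|\,\mathrm{d}\widehat{x}$, one obtains the classical scaling bounds
\[
|\bfvh|_{l,K}\le C\,\|B_K^{-1}\|^{l}\,|\det B_K|^{1/2}\,|\widehat{\bfv}|_{l,\widehat{K}},\qquad
|\widehat{\bfv}|_{m,\widehat{K}}\le C\,\|B_K\|^{m}\,|\det B_K|^{-1/2}\,|\bfvh|_{m,K},
\]
where $C$ depends only on $d$, $l$ and $m$.

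Next, on the fixed reference element the space $\PP_l(\widehat{K})$ is finite dimensional, so all (semi)norms on it are equivalent. The only subtlety is that, for $m\ge 1$, the quantity $|\cdot|_{m,\widehat{K}}$ is merely a seminorm; however it vanishes exactly on $\PP_{m-1}(\widehat{K})$ and therefore induces a genuine norm on the quotient space $\PP_l(\widehat{K})/\PP_{m-1}(\widehat{K})$. Since $m\le l$, the seminorm $|\cdot|_{l,\widehat{K}}$ vanishes on $\PP_{l-1}(\widehat{K})\supseteq\PP_{m-1}(\widehat{K})$ and hence also descends to that quotient, where, by finite dimensionality, it is dominated by the induced norm. (When $m=0$ one simply uses that $|\cdot|_{0,\widehat{K}}=\|\cdot\|_{0,\widehat{K}}$ is already a norm.) This produces a constant $\widehat{C}=\widehat{C}(\widehat{K},l,m,d)$ with $|\widehat{\bfv}|_{l,\widehat{K}}\le \widehat{C}\,|\widehat{\bfv}|_{m,\widehat{K}}$ for every $\bfvh\in\bfH_h$.

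Finally I would combine the three estimates and eliminate the reference quantities. For a shape-regular family one has the standard bounds $\|B_K\|\le C\,\hK$ and $\|B_K^{-1}\|\le C\,\hK^{-1}$, the latter relying on the fact that the inradius of $K$ is comparable to $\hK$ uniformly in $K$. Inserting these and cancelling the factors $|\det B_K|^{\pm 1/2}$ yields
\[
|\bfvh|_{l,K}\le C\,\widehat{C}\,\|B_K^{-1}\|^{l}\,\|B_K\|^{m}\,|\bfvh|_{m,K}\le \Cinv\,\hK^{m-l}\,|\bfvh|_{m,K},
\]
with $\Cinv$ depending only on $\widehat{K}$, $l$, $m$, $d$ and the shape-regularity constant, hence independent of $K$ and of $h$. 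The passage from the scalar case to vector-valued $\bfvh$ is immediate, by applying the scalar estimate componentwise and summing. I expect the main obstacle to be precisely the seminorm issue in the reference estimate: one cannot invoke norm equivalence on $\PP_l(\widehat{K})$ directly, since $|\cdot|_{m,\widehat{K}}$ fails to be a norm for $m\ge 1$, and the argument must instead pass through the quotient space (equivalently, through a compactness or Bramble--Hilbert-type argument).
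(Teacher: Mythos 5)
Your proof is correct and complete. Note, however, that the paper does not actually prove this lemma: its ``proof'' is a citation to \cite[Lemma 12.1]{EG21}, so what you have written is a self-contained reconstruction of the standard textbook argument behind that citation --- affine pullback to a fixed reference simplex $\widehat K$, equivalence of norms in finite dimension, and shape regularity (the paper's hypothesis of a \emph{regular} family of triangulations) to get $\|B_K\|\le C \hK$ and $\|B_K^{-1}\|\le C \hK^{-1}$. You also correctly isolate and resolve the one genuinely delicate point, namely that $|\cdot|_{m,\widehat K}$ is only a seminorm when $m\ge 1$, by passing to the quotient $\PP_l(\widehat K)/\PP_{m-1}(\widehat K)$, on which it becomes a norm and on which $|\cdot|_{l,\widehat K}$ is still well defined precisely because $m\le l$. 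As a side remark, the quotient step can be bypassed: first prove the case $m=0$ (on a finite-dimensional space any seminorm is dominated by any norm, no kernel issue), which gives $|w|_{s,K}\le C \hK^{-s}\|w\|_{0,K}$ for $w\in\PP_s(K)$; then, for general $m$, write each derivative of order $l$ as an order-$(l-m)$ derivative of an order-$m$ derivative and apply that case to $D^\alpha \bfvh\in\PP_{l-m}(K)^d$ for $|\alpha|=m$, summing over $\alpha$ to recover $|\bfvh|_{l,K}\le C \hK^{m-l}|\bfvh|_{m,K}$. Both routes yield a constant depending only on $\widehat K$, $d$, $l$, $m$ and the shape-regularity constant, hence independent of $K$ and $h$, as required.
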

\begin{proof}
See \cite[Lemma 12.1]{EG21}.
\end{proof}
\begin{lemma}\label{trace}[Trace inequality - I]
Let $\bfvh\in\bfH_h$ then for each $K\in \trih$, $E\subset \partial K$,  there exists a positive constant $\Ctr$, independent of $K$, such that
\[
\|\bfvh\|_{0,E} \leq \Ctr\, h^{-\frac{1}{2}}_K \|\bfvh\|_{0,K}.
\] 
\end{lemma}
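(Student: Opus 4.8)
The plan is to prove this via the standard scaling argument, mapping the physical element $K$ to a fixed reference simplex $\hat{K}$ via an affine bijection $F_K(\hat{x}) = B_K\hat{x} + b_K$, establishing the trace estimate on $\hat{K}$ where all norms are equivalent on the finite-dimensional polynomial space $\PP_l(\hat{K})$, and then transferring the estimate back to $K$ using the scaling relations for the Jacobian $B_K$ together with the shape-regularity of the family $\{\trih\}_{h>0}$. Since $\bfvh|_K \in \PP_l(K)^d$, the pullback $\hat{\bfv} := \bfvh \circ F_K$ lies in $\PP_l(\hat K)^d$, so everything reduces to a uniform estimate on the reference element.

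The key steps, in order, are as follows. First, I would establish the \emph{reference trace inequality}: there exists a constant $\hat{C}$, depending only on $\hat{K}$ and $l$, such that $\|\hat{\bfv}\|_{0,\hat{E}} \leq \hat{C}\,\|\hat{\bfv}\|_{0,\hat{K}}$ for all $\hat{\bfv}\in\PP_l(\hat{K})^d$ and every face $\hat{E}\subset\partial\hat{K}$. This holds because both sides define norms (or seminorms controlled by a norm) on the finite-dimensional space $\PP_l(\hat{K})^d$, and on a finite-dimensional space all norms are equivalent; alternatively one invokes the continuous trace theorem $H^1(\hat{K})\hookrightarrow L^2(\hat{E})$ combined with the inverse estimate on $\PP_l$. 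Second, I would record the standard scaling bounds: under shape-regularity there are constants such that $\|\det B_K\|$ scales like $h_K^d$, the surface measure of $E$ scales like $h_K^{d-1}$, and consequently the change of variables in the two integrals yields $\|\bfvh\|_{0,E}^2 \simeq h_K^{d-1}\|\hat{\bfv}\|_{0,\hat{E}}^2$ and $\|\bfvh\|_{0,K}^2 \simeq h_K^{d}\|\hat{\bfv}\|_{0,\hat{K}}^2$. Third, I would combine these: applying the reference inequality and substituting the scalings gives
\[
\|\bfvh\|_{0,E}^2 \;\lesssim\; h_K^{d-1}\,\hat{C}^2\,\|\hat{\bfv}\|_{0,\hat{K}}^2 \;\simeq\; h_K^{d-1}\,h_K^{-d}\,\|\bfvh\|_{0,K}^2 \;=\; h_K^{-1}\,\|\bfvh\|_{0,K}^2,
\]
and taking square roots produces the claimed bound $\|\bfvh\|_{0,E}\leq\Ctr\,h_K^{-1/2}\|\bfvh\|_{0,K}$ with $\Ctr$ absorbing $\hat{C}$ and the shape-regularity constants.

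The main obstacle, and the only place care is genuinely needed, is controlling the constants uniformly over the whole family of triangulations: the scaling relations introduce factors involving $\|B_K\|$ and $\|B_K^{-1}\|$, and their product is bounded only because \emph{shape-regularity} guarantees $\|B_K\|\,\|B_K^{-1}\|\leq C$ uniformly, so that $h_K$ alone governs the scaling and no hidden dependence on the element aspect ratio survives in $\Ctr$. That said, since this is a classical result, the cleanest route is simply to cite a standard reference (e.g.\ \cite{EG21}), as the authors do for Lemma \ref{inverse}, rather than reproduce the scaling argument in full.
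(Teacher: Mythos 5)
Your proposal is correct: the scaling argument (pullback to a reference simplex, norm equivalence on the finite-dimensional space $\PP_l(\hat K)^d$, and the shape-regularity scalings $|E|\lesssim h_K^{d-1}$, $|\det B_K|\simeq h_K^d$) is the standard proof of this discrete trace inequality, and each step checks out. The paper itself gives no argument at all---it simply cites \cite[Lemma 12.8]{EG21}---which is exactly the route you recommend in your closing remark, so the two are in substance the same.
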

\begin{proof}
See \cite[Lemma 12.8]{EG21}.
\end{proof}
\begin{lemma}\label{trace_b}[Trace inequality - II]
Let $v\in H^{1}(K)$, with $K\in\trih$. Then for any face (edge) $E\subset \partial K$, there exists a positive constant $\Ctr'$, independent of $K$, such that
\[
\|v\|_{0,E} \leq \Ctr' \left\{ h^{-1/2}_K \|v\|_{0,K} +  h^{1/2}_K \|\nabla v\|_{0,K}    \right\}
\]
\end{lemma}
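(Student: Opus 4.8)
The plan is to reduce the estimate to a fixed reference simplex by an affine change of variables, apply the continuous trace theorem there, and then track how each of the three norms scales under the transformation, using shape-regularity of the family $\{\trih\}_{h>0}$ to convert the geometric factors into the correct powers of $\hK$.

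First I would fix a reference simplex $\hat K$ with a face $\hat E$ and invoke the standard (continuous) trace theorem, which provides a constant $\hat C$, depending only on $\hat K$, such that
\[
\|\hat v\|_{0,\hat E} \leq \hat C\,\bigl( \|\hat v\|_{0,\hat K}^2 + \|\hat\nabla \hat v\|_{0,\hat K}^2 \bigr)^{1/2}, \qquad \forall\, \hat v \in H^1(\hat K).
\]
No density argument is needed, since this trace map $H^1(\hat K)\to L^2(\partial\hat K)$ is already bounded. Let $F_K(\hat x)=B_K\hat x+b_K$ be the affine bijection with $F_K(\hat K)=K$ and $F_K(\hat E)=E$, and set $\hat v := v\circ F_K$.

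Next I would collect the scaling identities. The chain rule gives $\hat\nabla\hat v = B_K^{T}(\nabla v)\circ F_K$, while the volume and surface Jacobians yield
\[
\|\hat v\|_{0,\hat K} = |\det B_K|^{-1/2}\|v\|_{0,K}, \qquad \|v\|_{0,E}^2 = |\det B_K|\,|B_K^{-T}\hat n|\,\|\hat v\|_{0,\hat E}^2,
\]
where $\hat n$ is the unit normal to $\hat E$ and the surface factor follows from the Nanson-type relation between $\mathrm{d}s$ and $\mathrm{d}\hat s$. Combining these with $\|\hat\nabla\hat v\|_{0,\hat K}\leq |\det B_K|^{-1/2}\|B_K\|\,\|\nabla v\|_{0,K}$ and inserting into the reference estimate, the determinant factors cancel and one is left with
\[
\|v\|_{0,E} \leq \hat C\,|B_K^{-T}\hat n|^{1/2}\bigl( \|v\|_{0,K}^2 + \|B_K\|^2\|\nabla v\|_{0,K}^2 \bigr)^{1/2}.
\]

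Finally I would invoke shape-regularity. For a regular family there exist constants, depending only on the regularity parameter, with $\|B_K\|\leq C\hK$ and $\|B_K^{-1}\|\leq C\rho_K^{-1}\leq C'\hK^{-1}$, where $\rho_K$ denotes the inradius of $K$; hence $|B_K^{-T}\hat n|^{1/2}\leq \|B_K^{-1}\|^{1/2}\leq C\hK^{-1/2}$. Substituting these bounds and splitting the square root gives exactly
\[
\|v\|_{0,E}\leq \Ctr'\bigl( \hK^{-1/2}\|v\|_{0,K} + \hK^{1/2}\|\nabla v\|_{0,K} \bigr),
\]
with $\Ctr'$ depending only on $\hat K$ and the shape-regularity constant, hence independent of $K$. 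The main obstacle, or at least the only delicate point, is the surface-measure scaling: obtaining the factor $|\det B_K|\,|B_K^{-T}\hat n|$ (rather than a naive $|\det B_K|$) is precisely what, after the cancellation of determinants, produces the asymmetric powers $\hK^{\mp 1/2}$. An alternative route avoiding this bookkeeping is a direct divergence-theorem argument: choose a field $\boldsymbol{\phi}$ with $\boldsymbol{\phi}\cdot\bfn \geq c>0$ on $E$ and integrate $\nabla\cdot(v^2\boldsymbol{\phi})$ over $K$, which exposes the $\hK$ powers through the size of $\boldsymbol{\phi}$ and $\nabla\cdot\boldsymbol{\phi}$; I would nonetheless keep the scaling argument as the primary proof, since it matches the cited reference \cite{EG21}.
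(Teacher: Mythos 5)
Your proof is correct, but it takes a genuinely different route from the paper's. The paper's proof is a two-line citation: it invokes the multiplicative trace inequality of \cite[Lemma 12.15]{EG21}, which for a shape-regular family gives a constant $C$ with $\|v\|_{0,E} \leq C\bigl( h_K^{-1/2}\|v\|_{0,K} + \|v\|_{0,K}^{1/2}\|\nabla v\|_{0,K}^{1/2}\bigr)$, and then applies Young's inequality in the form $\|v\|_{0,K}^{1/2}\|\nabla v\|_{0,K}^{1/2} \leq \tfrac{1}{2}\bigl(h_K^{-1/2}\|v\|_{0,K} + h_K^{1/2}\|\nabla v\|_{0,K}\bigr)$ to absorb the mixed term. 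You instead give a self-contained pull-back argument: continuous trace theorem on a fixed reference simplex, exact scaling of the three norms under the affine map (your identification of the surface Jacobian $|\det B_K|\,|B_K^{-T}\hat n|$ as the source of the asymmetric powers $\hK^{\mp 1/2}$ is indeed the delicate point, and your bookkeeping is correct), and shape regularity to obtain $\|B_K\|\leq C\hK$ and $|B_K^{-T}\hat n|\leq \|B_K^{-1}\|\leq C\hK^{-1}$. What each buys: the paper's route is shorter given the reference and passes through the sharper multiplicative form, which implies the additive statement (via Young) but not conversely; your route is elementary and makes explicit that $\Ctr'$ depends only on the reference trace constant and the shape-regularity parameter. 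Note that the divergence-theorem alternative you sketch at the end is essentially how \cite[Lemma 12.15]{EG21} itself is proven, so that variant---not your primary scaling proof---is the one closest to the paper's underlying argument. One cosmetic remark: the reference constant $\hat C$ a priori depends on which face $\hat E$ of $\hat K$ is chosen, but since $\hat K$ has finitely many faces (or since the trace theorem controls $\|\hat v\|_{0,\partial \hat K}$ directly), taking a maximum keeps $\Ctr'$ independent of $K$ and $E$, as you claim.
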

\begin{proof}
Use \cite[Lemma 12.15]{EG21} and Young's inequality.
\end{proof}
\noindent
To define our stabilized scheme, we start by defining the following notation
\[
\revision{\langle \, \phi,\psi \rangle_{1/2,h,\EE_i} := \sum_{E\in \EE_i} \frac{1}{h_E} (\phi , \psi)_E},
\]
where $i=D\;\mbox{or}\; S$. 

Let $\theta \in \left\{ -1, 0, 1\right\}$. 
\revision{For a given Nitsche parameter $\gamma_0>0$ and a given stabilization parameter $\beta > 0$, the considered stabilized discrete scheme is given by:}  
\begin{problem}
Find $(\bfuh,\ph)\in\bfH_h\times Q_h$ such that
\begin{equation}\label{stab}
B_S((\bfuh,\ph),(\bfvh,\qh)) = F_S(\bfvh,\qh)\qquad \forall (\bfvh,\qh)\in\bfH_h\times Q_h,
\end{equation}
where
\begin{equation}\label{eq:B_S_h}
\begin{aligned}
B_S((\bfuh,\ph),(\bfvh,\qh)) :=&\,  B((\bfuh,\ph),(\bfvh,\qh))  - 2\nu(\ve(\bfuh)\bfn, \bfvh)_{\GD} - 2\theta\nu(\ve(\bfvh)\bfn, \bfuh)_{\GD}  \\
& + \nu \langle \gamma_0\,\bfuh, \bfvh \rangle_{1/2,h,\GD}   + (\ph, \bfvh\cdot\bfn)_{\GD} + \theta(\qh, \bfuh\cdot\bfn)_{\GD}\\
& -2\nu(\ve(\bfuh)\bfn\cdot\bfn, \bfvh\cdot\bfn)_{\Gt} - 2\theta\nu(\ve(\bfvh)\bfn\cdot\bfn, \bfuh\cdot\bfn)_{\Gt}  \\
& + \nu \langle \gamma_0\,\bfuh\cdot \bfn, \bfvh \cdot \bfn\rangle_{1/2,h,\Gt}   + (\ph, \bfvh\cdot\bfn)_{\Gt}
+ \theta(\qh, \bfuh\cdot\bfn)_{\Gt}
\\
& + \dfrac{\beta}{\nu} \sumkh h^2_K\,(-2\nu\,\nabla\cdot\ve(\bfuh) +\nabla \ph, \nabla \qh)_K\\
\end{aligned}
\end{equation}
and
\begin{equation}\label{eq:F_S_h}
\begin{aligned}
F_S(\bfvh,\qh) :=&\, F(\bfvh,\qh) -2\nu\theta ( \bfh, \ve(\bfvh)\bfn)_{\GD}+\theta ( \bfh\cdot\bfn, \qh)_{\GD} + \fch{\nu}\langle \gamma_0\,\bfh ,\bfvh \rangle_{1/2,h,\GD}\\
&-2\nu\theta ( g, \ve(\bfvh)\bfn\cdot\bfn)_{\Gt}+\theta ( g, \qh)_{\Gt} + \nu \langle \gamma_0\,g ,\bfvh \cdot \bfn\rangle_{1/2,h,\Gt}\\
& + \dfrac{\beta}{\nu} \sumkh h^2_K\,(\bff, \nabla \qh)_K\,.
\end{aligned}
\end{equation}
\end{problem}

\revision{The bilinear form \eqref{eq:B_S_h} contains, besides
the standard terms of the weak form of the Stokes equations, 
the Nitsche terms on the Dirichlet boundary ($\Gamma_D$, first and second line), the Nitsche terms on the slip boundary 
($\Gamma_S$, third and fourth lines), and the 
stabilization term (fifth line).}

\revision{The different choices of the parameter $\theta$ allow to recover three variants of the method 
as for Nitsche for Dirichlet boundary condition~\cite{C22} and as for the discontinuous Galerkin Interior Penalty (dGIP) method~\cite{dipietro2012}:
\begin{enumerate}
\item For $\theta=1$, the method is symmetric in the spirit of Nitsche's original formulation~\cite{N71}. 
It can be built alternatively from an augmented Lagrangian formalism, see \cite[Section 5.2.2]{burman2022}. 
\item For $\theta=0$, we get the simplest, incomplete, formulation, that has the less terms, and that is presented for instance in \cite[Section 37.1]{EG21b}.
\item For $\theta=-1$, we recover the skew-symmetric formulation
of J. Freund and R. Stenberg~\cite{FS95}, where discrete ellipticity is ensured for any $\gamma_0 > 0$.
\end{enumerate}
}


\noindent
\fch{We state below a consistency result:}
\begin{lemma}[Consistency]\label{consist}
Let $(\bfu,p)\in \bfH\times Q$ and $(\bfuh,\ph)\in\bfH_h\times Q_h$ be the solutions to Problem \eqref{variational} and Problem \eqref{stab}, respectively. Assume that $(\bfu,p)\in (H^2(\OO)^d\cap \bfH) \times(H^1(\OO)\cap Q)$,
then 
\[
B_S((\bfu -\bfuh, p-\ph),(\bfvh,\qh)) = 0\qquad\forall  (\bfvh,\qh)\in \bfH_h\times Q_h.
\]
\end{lemma}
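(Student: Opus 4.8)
The plan is to prove the stronger \emph{consistency} statement that the exact solution itself satisfies the discrete equation, i.e. $B_S((\bfu,p),(\bfvh,\qh)) = F_S(\bfvh,\qh)$ for every $(\bfvh,\qh)\in\bfH_h\times Q_h$, and then deduce the claimed Galerkin orthogonality. Indeed, since $B_S$ is linear in its first argument, subtracting the discrete equation \eqref{stab}, which reads $B_S((\bfuh,\ph),(\bfvh,\qh))=F_S(\bfvh,\qh)$, immediately gives $B_S((\bfu-\bfuh,p-\ph),(\bfvh,\qh))=0$. The extra regularity $(\bfu,p)\in (H^2(\OO)^d\cap\bfH)\times(H^1(\OO)\cap Q)$ is precisely what makes every term on the left-hand side meaningful: $\sigma(\bfu,p)\in H^1(\OO)$ so that $\ve(\bfu)\bfn$ and $p$ admit $L^2$ traces on $\partial\OO$, and $\nabla\cdot\ve(\bfu)\in L^2(\OO)$ so that the stabilization sum is well defined.

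First I would handle the Stokes part $B((\bfu,p),(\bfvh,\qh))=a(\bfu,\bfvh)+b(\bfvh,p)-b(\bfu,\qh)$. The incompressibility constraint $\nabla\cdot\bfu=0$ kills $-b(\bfu,\qh)$. Using the symmetry of $\ve(\bfu)$ and the identity $p\,\nabla\cdot\bfvh=p\II:\nabla\bfvh$ I would rewrite the remaining part as $a(\bfu,\bfvh)+b(\bfvh,p)=(\sigma(\bfu,p),\nabla\bfvh)_{\OO}$, and then integrate by parts over $\OO$ (legitimate because $\sigma(\bfu,p)\in H^1(\OO)$, so no interior-face jumps arise and only a single boundary term on $\partial\OO$ is produced). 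Invoking the strong momentum balance $-\nabla\cdot\sigma(\bfu,p)=\bff$ yields $B((\bfu,p),(\bfvh,\qh))=(\bff,\bfvh)_{\OO}+\int_{\partial\OO}(\sigma(\bfu,p)\bfn)\cdot\bfvh\,\mathrm{d}s$, with the boundary integral split over $\GD$ and $\Gt$.

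The heart of the argument is the treatment of this boundary residual, and the key point is that $\bfvh$ need not belong to $\bfH$ (cf.\ the Remark), so none of the boundary terms may be discarded. On $\Gt$ I would decompose $\bfvh$ into its normal and tangential components and use $\sigma(\bfu,p)\bfn\cdot\bft_i=s_i$: the tangential part reproduces exactly the surface term $\sum_i(s_i,\bfvh\cdot\bft_i)_{\Gt}$ already contained in $F$, while the normal part equals $(2\nu\ve(\bfu)\bfn\cdot\bfn-p,\bfvh\cdot\bfn)_{\Gt}$ and cancels precisely against the two natural Nitsche terms of \eqref{eq:B_S_h} evaluated at $(\bfu,p)$, namely $-2\nu(\ve(\bfu)\bfn\cdot\bfn,\bfvh\cdot\bfn)_{\Gt}$ and $(p,\bfvh\cdot\bfn)_{\Gt}$. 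Similarly, on $\GD$ the residual $2\nu(\ve(\bfu)\bfn,\bfvh)_{\GD}-(p,\bfvh\cdot\bfn)_{\GD}$ cancels against $-2\nu(\ve(\bfu)\bfn,\bfvh)_{\GD}$ and $(p,\bfvh\cdot\bfn)_{\GD}$. What survives are the symmetrizing $\theta$-terms and the penalty terms, all depending on $\bfu$ only through its boundary values; substituting the essential conditions $\bfu=\bfh$ on $\GD$ and $\bfu\cdot\bfn=g$ on $\Gt$ turns them exactly into the corresponding boundary contributions of $F_S$ in \eqref{eq:F_S_h}. Finally, for the stabilization line I would use that $-2\nu\,\nabla\cdot\ve(\bfu)+\nabla p=-\nabla\cdot\sigma(\bfu,p)=\bff$ holds in $L^2(\OO)$, so that $\tfrac{\beta}{\nu}\sumkh h^2_K(-2\nu\,\nabla\cdot\ve(\bfu)+\nabla p,\nabla\qh)_K$ becomes the stabilization term $\tfrac{\beta}{\nu}\sumkh h^2_K(\bff,\nabla\qh)_K$ of $F_S$.

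Collecting these identities gives $B_S((\bfu,p),(\bfvh,\qh))=F_S(\bfvh,\qh)$, and the orthogonality follows by linearity as explained. I expect the only genuine difficulty to be the careful bookkeeping of the boundary terms — in particular the normal/tangential splitting on $\Gamma_S$ and matching the signs of the $\theta$-dependent and penalty terms — rather than any conceptual obstacle; the regularity hypothesis is needed precisely to license the integration by parts and the pointwise (in $L^2$) evaluation of the strong residual.
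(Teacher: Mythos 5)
Your proposal is correct and is exactly the argument the paper intends: its one-line proof ("a direct consequence of the definition of Problem \eqref{variational} and Problem \eqref{stab}, combined with the regularity assumptions") is precisely the computation you carry out, namely showing $B_S((\bfu,p),(\bfvh,\qh))=F_S(\bfvh,\qh)$ via integration by parts and the strong equations, then subtracting the discrete equation. Your bookkeeping of the boundary terms (cancellation of the consistency terms on $\GD$ and $\Gt$, matching of the $\theta$-dependent and penalty terms with $F_S$, and the identification $-2\nu\,\nabla\cdot\ve(\bfu)+\nabla p=\bff$ in the stabilization term) is accurate.
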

\begin{proof}
The proof is a direct consequence of the definition of 
\revision{Problem \eqref{variational} and Problem \eqref{stab}},
combined with the regularity assumptions.
\end{proof}

\noindent
Over $\bfH_h\times Q_h$, we consider the discrete norm
\[
\nhh{(\bfvh,\qh)} := 	\left({\nu} \|\ve(\bfvh)\|^2_{0,\OO}  + \sum_{E\in \EE_D} \frac{{\nu}}{h_E} \| \bfvh\|^2_{0,E}+  \sum_{E\in \EE_S} \frac{{\nu}}{h_E} \| \bfvh\cdot\bfn\|^2_{0,E} + \sumkh \frac{h^2_K}{{\nu}}\, \|\nabla \qh\|^2_{0,K}\right)^{1/2}\,.
\]
We state below the continuity of the discrete bilinear form.
\begin{theorem}\label{cont}
For $\theta = -1, 0, 1$, there exists a positive constant $C_a$, independent of $h$, $\nu$, and $\theta$, such that
\[
B_S((\bfuh,\ph),(\bfvh,\qh)) \leq C_a\, \|(\bfuh,\ph)\|\,\|{(\bfvh,\qh)}\| \qquad \forall \,(\bfuh,\ph),\,(\bfvh,\qh)\in \bfH_h\times Q_h.
\]
\end{theorem}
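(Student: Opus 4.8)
The plan is to prove continuity by expanding $B_S$ into four groups of terms---the bulk Stokes form $B((\bfuh,\ph),(\bfvh,\qh))$, the Nitsche terms on $\GD$, the Nitsche terms on $\Gt$, and the stabilization term---and bounding each group by the Cauchy--Schwarz inequality followed by the trace and inverse inequalities (Lemmas~\ref{trace} and~\ref{inverse}). The guiding principle throughout is to split the weights $h_E$ (or $h_K$) and $\nu$ symmetrically between the two arguments, so that each resulting factor is absorbed by exactly one contribution of the norm of $(\bfuh,\ph)$ or of $(\bfvh,\qh)$; this symmetric splitting is what keeps the constant independent of $h$ and $\nu$.

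First I would dispose of the bulk terms. The viscous form $a(\bfuh,\bfvh)=2\nu(\ve(\bfuh),\ve(\bfvh))_{\OO}$ is controlled directly by Cauchy--Schwarz, writing $2\nu(\ve(\bfuh),\ve(\bfvh))_\OO=2\,(\sqrt\nu\,\ve(\bfuh),\sqrt\nu\,\ve(\bfvh))_\OO$. The divergence terms $b(\bfvh,\ph)$ and $b(\bfuh,\qh)$ are handled using $\|\nabla\cdot\bfvh\|_{0,\OO}\le\sqrt d\,\|\ve(\bfvh)\|_{0,\OO}$ (since $\nabla\cdot\bfvh=\mathrm{tr}\,\ve(\bfvh)$), pairing the velocity energy with the pressure part of the norm. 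These terms need nothing beyond Cauchy--Schwarz.

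The Nitsche boundary terms are the technical core. For a consistency (flux) term such as $-2\nu(\ve(\bfuh)\bfn,\bfvh)_{\GD}$ (and, for $\theta\neq0$, its symmetric companion $-2\theta\nu(\ve(\bfvh)\bfn,\bfuh)_{\GD}$), I would on each edge $E\subset\partial K$ factor the integrand as $\bigl(\sqrt\nu\,h_E^{1/2}\|\ve(\bfuh)\bfn\|_{0,E}\bigr)\bigl(\sqrt\nu\,h_E^{-1/2}\|\bfvh\|_{0,E}\bigr)$, bound the first factor by $\Ctr\sqrt\nu\,\|\ve(\bfuh)\|_{0,K}$ through the trace inequality (Lemma~\ref{trace}) applied to the piecewise polynomial $\ve(\bfuh)$, and recognize the second factor as the $\EE_D$-contribution to $\nhh{(\bfvh,\qh)}$; a discrete Cauchy--Schwarz over $E\in\EE_D$ then closes this term with a constant depending only on $\Ctr$. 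The penalty terms $\nu\langle\gamma_0\bfuh,\bfvh\rangle_{1/2,h,\GD}$ and the pressure--normal couplings $(\ph,\bfvh\cdot\bfn)_{\GD}$, $\theta(\qh,\bfuh\cdot\bfn)_{\GD}$ follow the same recipe, distributing $h_E^{\pm1/2}$ and $\nu^{\pm1/2}$ so that each factor matches a boundary term of $\nhh\cdot$ or the pressure part of the norm. The slip terms on $\Gt$ are structurally identical, with $\bfvh$ replaced by $\bfvh\cdot\bfn$, so they reuse the same bounds verbatim. Since $|\theta|\le1$, all constants are uniform in $\theta\in\{-1,0,1\}$.

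Finally, I would split the stabilization term into its pressure part $\frac{\beta}{\nu}\sumkh h_K^2(\nabla\ph,\nabla\qh)_K$, which is immediately bounded by the last contribution of $\nhh{(\bfuh,\ph)}$ and $\nhh{(\bfvh,\qh)}$, and its velocity part $-2\beta\sumkh h_K^2(\nabla\cdot\ve(\bfuh),\nabla\qh)_K$. For the latter, the second-order quantity is reduced by the inverse inequality (Lemma~\ref{inverse}) to $\|\nabla\cdot\ve(\bfuh)\|_{0,K}\le C\,\Cinv\,h_K^{-1}\|\ve(\bfuh)\|_{0,K}$, after which the leftover $h_K$ pairs $\sqrt\nu\,\|\ve(\bfuh)\|_{0,K}$ with $h_K\nu^{-1/2}\|\nabla\qh\|_{0,K}$. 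I expect the main obstacle to be precisely the weighting bookkeeping in the Nitsche flux terms: one must place exactly one half-power of $h_E$ and one half-power of $\nu$ on each factor so that the trace inequality turns the boundary flux into a bulk energy while the remaining factor is exactly a term of $\nhh\cdot$---any imbalance reintroduces spurious powers of $h$ or $\nu$ and breaks the claimed robustness. The velocity stabilization term is the only place where the inverse inequality, rather than the trace inequality, is required.
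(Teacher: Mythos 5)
Your proposal is correct and follows essentially the same route as the paper, whose entire proof is a one-sentence appeal to exactly the ingredients you deploy: Cauchy--Schwarz/H\"older with symmetric splitting of the $\nu$ and $h$ weights, the trace inequality (Lemma~\ref{trace}) for the Nitsche flux, penalty and pressure--normal terms, and the inverse inequality (Lemma~\ref{inverse}) for the velocity part of the stabilization term. The only caveat, inherited from the paper's statement rather than introduced by you, is that the factors your estimates produce land partly in the discrete norm $\nhh{\cdot}$ (boundary and stabilization contributions) and partly in $\|\cdot\|$ (the $L^2$ pressure norm needed for the $b$-couplings), so the displayed inequality should be understood with respect to this combined norm.
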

\begin{proof}
\fch{This is} \alfonso{as a direct consequence of}
lemmas \ref{inverse}, \ref{trace}, Cauchy-Schwarz and H\"older inequalities. 
\end{proof}

\noindent
The well-posedness of our finite element discretization is established as follows.

\begin{theorem}[Well-posedness]
\label{well}
For $\theta = -1, 0, 1$,
\fch{for $\gamma_0$ large enough and for $\beta$ small enough, there exists}
%
a positive constant  $C_S = C_S(\theta,\beta,\gamma_0)$, 
independent on $h$ and $\nu$,
such that,
\[
B_S((\bfuh,\ph),(\bfuh,\ph)) \geq C_S \nhh{(\bfuh,\ph)}^2 \qquad \forall \,\,(\bfuh,\ph)\in \bfH_h\times Q_h\,.
\]
\revision{As a result, Problem \eqref{stab} admits a unique solution.}
The \fch{bounds on the }parameters $\beta$ and $\gamma_0$ depend only
on the trace and inverse inequality constants. Moreover,
in the skew-symmetric case $\theta=- 1$, well-posedness can be proven
for any $\gamma_0>0$.
\end{theorem}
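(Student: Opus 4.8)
The plan is to prove the coercivity bound directly, by testing \eqref{eq:B_S_h} with $(\bfvh,\qh)=(\bfuh,\ph)$ and absorbing all the indefinite contributions into the four positive terms that reproduce $\nhh{(\bfuh,\ph)}^2$. On the diagonal, $B((\bfuh,\ph),(\bfuh,\ph))=a(\bfuh,\bfuh)=2\nu\|\ve(\bfuh)\|^2_{0,\OO}$, since the two copies of $b$ cancel; the penalty terms yield $\nu\gamma_0\big(\sum_{E\in\EE_D}h_E^{-1}\|\bfuh\|^2_{0,E}+\sum_{E\in\EE_S}h_E^{-1}\|\bfuh\cdot\bfn\|^2_{0,E}\big)$; and the diagonal part of the stabilization yields $\tfrac{\beta}{\nu}\sumkh \hK^2\|\nabla\ph\|^2_{0,K}$. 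These four quantities are precisely (a multiple of) the terms of the norm. It then remains to control the sign-indefinite remainder, namely the Nitsche consistency terms $-2\nu(1+\theta)\big[(\ve(\bfuh)\bfn,\bfuh)_{\GD}+(\ve(\bfuh)\bfn\cdot\bfn,\bfuh\cdot\bfn)_{\Gt}\big]$, the pressure--boundary coupling $(1+\theta)\big[(\ph,\bfuh\cdot\bfn)_{\GD}+(\ph,\bfuh\cdot\bfn)_{\Gt}\big]$, and the stabilization cross term $-2\beta\sumkh \hK^2(\nabla\cdot\ve(\bfuh),\nabla\ph)_K$.

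The skew-symmetric case $\theta=-1$ is immediate and explains the last sentence of the statement: every factor $(1+\theta)$ vanishes, so both the consistency and the pressure--boundary terms disappear, and only the cross term survives, to be controlled for any $\gamma_0>0$. For the cross term (present for all $\theta$) I would use the inverse inequality of Lemma \ref{inverse} to write $\|\nabla\cdot\ve(\bfuh)\|_{0,K}\le \Cinv\hK^{-1}\|\ve(\bfuh)\|_{0,K}$, reducing it to $2\beta\Cinv\sumkh \hK\|\ve(\bfuh)\|_{0,K}\|\nabla\ph\|_{0,K}$, and then split it with a weighted Young inequality (weights $\sqrt\nu$ and $1/\sqrt\nu$) between $\nu\|\ve(\bfuh)\|^2_{0,\OO}$ and $\tfrac1\nu\sumkh\hK^2\|\nabla\ph\|^2_{0,K}$. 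Requiring $\beta$ small guarantees that the pressure part remains a strict fraction of the available stabilization while the viscous part stays below $2\nu\|\ve(\bfuh)\|^2_{0,\OO}$.

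When $\theta\in\{0,1\}$ the two consistency terms are handled in the classical Nitsche way: bounding $\|\ve(\bfuh)\bfn\|_{0,E}\le\|\ve(\bfuh)\|_{0,E}$, applying the trace inequality of Lemma \ref{trace}, and using Young against the boundary penalties, the velocity traces are absorbed into $\nu\gamma_0 h_E^{-1}\|\bfuh\|^2_{0,E}$ (resp. $\|\bfuh\cdot\bfn\|^2_{0,E}$) and the symmetric-gradient factor into $\nu\|\ve(\bfuh)\|^2_{0,\OO}$; this is where $\gamma_0$ must be taken large, with a threshold depending only on $\Ctr$ and $\Cinv$ as announced. The genuinely delicate step --- and, I expect, the main obstacle --- is the pressure--boundary coupling $(1+\theta)(\ph,\bfuh\cdot\bfn)_{\GtD}$, because the pressure is controlled by the norm only through the weak quantity $\sumkh\hK^2\nu^{-1}\|\nabla\ph\|^2_{0,K}$. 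My plan is to split it by Young against the velocity penalties, which leaves a pressure-trace term of the form $\nu^{-1}h_E\|\ph\|^2_{0,E}$, and then to invoke the sharper trace inequality of Lemma \ref{trace_b}, $\|\ph\|_{0,E}\lesssim \hK^{-1/2}\|\ph\|_{0,K}+\hK^{1/2}\|\nabla\ph\|_{0,K}$: the gradient contribution is exactly of stabilization type and is absorbed into $\tfrac\beta\nu\sumkh\hK^2\|\nabla\ph\|^2_{0,K}$. Closing the estimate on the remaining element-value part of the pressure trace is where the simultaneous presence of the penalty ($\gamma_0$ large) and of the stabilization ($\beta$ small) has to be exploited jointly; this is the point at which the thresholds on the two parameters become coupled, and I expect it to be the most technical part of the argument, embodying the interplay between stabilization and Nitsche that the paper emphasizes.

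Finally, once coercivity is established, well-posedness follows at once: Problem \eqref{stab} is a square linear system on the finite-dimensional space $\bfH_h\times Q_h$, so the coercivity estimate forces injectivity, hence existence and uniqueness; combined with the continuity of Theorem \ref{cont}, this is exactly the Lax--Milgram conclusion.
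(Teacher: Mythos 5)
Your $\theta=-1$ argument is complete and coincides with the paper's own proof: on the diagonal every term carrying the factor $(1+\theta)$ disappears, only the stabilization cross term survives, and Lemma \ref{inverse} plus Young give coercivity for any $\gamma_0>0$ once $\beta<\Cinv^{-2}$ (the paper obtains $C_S=\min\{2(1-\beta\Cinv^2),\gamma_0,\beta/2\}$, cf.\ \eqref{eq:C_S}). Your treatment of the Nitsche consistency terms for $\theta\in\{0,1\}$ is also exactly the paper's, namely \eqref{eq1}--\eqref{eq1b}. The gap is the one you flagged yourself: the pressure--boundary coupling $(1+\theta)(\ph,\bfuh\cdot\bfn)_{\GtD}$ is never actually estimated, and announcing that the interplay of $\gamma_0$ large and $\beta$ small ``has to be exploited jointly'' is not an argument. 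Worse, the step cannot be closed along the route you sketch: after Young against the penalty you are left with $\sum_E \nu^{-1}h_E\|\ph\|^2_{0,E}$, and either trace inequality (Lemma \ref{trace} or Lemma \ref{trace_b}) converts this into a multiple of $\nu^{-1}\|\ph\|^2_{0,K}$ plus an absorbable gradient part. The norm $\nhh{(\bfuh,\ph)}$ contains no control of $\|\ph\|_{0,\OO}$ at all --- only of $\sumkh \nu^{-1}h_K^2\|\nabla\ph\|^2_{0,K}$ --- and the inequality $\|\ph\|_{0,K}\leq C\,h_K\|\nabla\ph\|_{0,K}$ that would be needed is false (take $\ph$ locally constant). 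No choice of $\gamma_0$ and $\beta$ repairs this, because the missing estimate does not involve these parameters.

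The obstruction is structural, not technical. On $\OO=(-1,1)^2$ let $\ph\in Q_h$ interpolate a fixed smooth zero-mean function equal to $1$ on a fixed band around $\partial\OO$ (so $|\ph|_{1,\OO}=O(1)$), and let $\bfuh=-t\,(x,y)$, so that $\bfuh\cdot\bfn=-t$ on $\partial\OO$, $\ve(\bfuh)=-t\,\mathrm{I}$ and $\nabla\cdot\ve(\bfuh)=\zero$. On the diagonal, the viscous, consistency and penalty terms are $O(\nu t^2)+O(\nu\gamma_0 t^2 h^{-1})$, the stabilization contributes $O(\beta h^2/\nu)$, while the coupling term equals $-(1+\theta)\,t\,|\partial\OO|$; choosing $t$ proportional to $(1+\theta)h/(\nu\gamma_0)$ makes $B_S((\bfuh,\ph),(\bfuh,\ph))$ negative, of order $-(1+\theta)^2h/(\nu\gamma_0)+O(h^2/\nu)$, for $h$ small, while $\nhh{(\bfuh,\ph)}>0$. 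So for $\theta\in\{0,1\}$ no elementwise absorption of this term can yield the stated bound. You should also be aware that the paper's own proof closes precisely this step, in \eqref{eq2} and \eqref{eq2b}, by writing $\|\ph\|^2_{0,K}\leq\Cinv^2 h_K^2\|\nabla\ph\|^2_{0,K}$, i.e.\ by applying Lemma \ref{inverse} in the reverse, invalid, direction; your instinct about where the difficulty sits is therefore accurate, but the difficulty is resolved neither in your proposal nor, as written, in the paper. Your concluding remark --- coercivity plus finite dimensionality gives unique solvability --- is correct and is all that is needed once the bound holds, as in the $\theta=-1$ case.
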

\begin{proof}\ \\
\noindent\underline{$\theta=-1$.} 
Take $(\bfuh,\ph)\in \bfH_h\times Q_h$.
Using Lemma \ref{inverse}, H\"older and Young inequalities,  we get that
\begin{align}
2\beta h^2_K\,(\nabla\cdot\ve(\bfuh), \nabla \ph)_K &\leq 
2\beta h^2_K\,\|\nabla\cdot\ve(\bfuh)\|_{0,K}\, \|\nabla \ph\|_{0,K}\nonumber\\
& = 2\beta \,\nu^{\frac12} \|\nabla\cdot\ve(\bfuh)\|_{0,K}\, h^2_K \nu^{-\frac12}\|\nabla \ph\|_{0,K}\nonumber\\
& \leq 2\beta \left( \frac{\delta_1}{2} \Cinv^2 \,\fch{\nu} 
\|\ve(\bfuh)\|^2_{0,K}
+ \frac{1}{2\delta_1} \frac{h^2_K}{\nu} \|\nabla \ph\|^2_{0,K}\right)\nonumber\\
&\leq \beta\,\Cinv^2\,\delta_1\,\nu \|\ve(\bfuh)\|^2_{0,K} +  \frac{\beta}{\delta_1} \, \dfrac{h^2_K}{\nu}\|\nabla \ph\|^2_{0,K},\label{eq0}
\end{align}
with $\delta_1$ a positive parameter to be chosen in a convenient way. Now, using \eqref{eq0} and the definition of $B_S$, with $\theta=-1$, we obtain
\begin{align*}
&B_S((\bfuh,\ph),(\bfuh,\ph)) = 2\nu\|\ve(\bfuh)\|^2_{0,\OO} + \fch{\nu}\langle \gamma_0\,\bfuh, \bfuh \rangle_{1/2,h,\GD} + \fch{\nu}\langle \gamma_0\,\bfuh\cdot \bfn, \bfuh \cdot \bfn\rangle_{1/2,h,\Gt} \\
&+ \dfrac{\beta}{\nu} \sumkh h^2_K\,(-2\nu\,\nabla\cdot\ve(\bfuh) +\nabla \ph, \nabla \ph)_K\\
&= 2\nu\|\ve(\bfuh)\|^2_{0,\OO} + \nu \sum_{E\in\EE_D}  \frac{\gamma_0}{h_E} \| \bfuh\|^2_{0,E}  + \nu \sum_{E\in \EE_S} \frac{\gamma_0}{h_E} \| \bfuh\cdot\bfn\|^2_{0,E}  \\
& \quad \quad  +\dfrac{\beta}{\nu} \sumkh h^2_K\,\|\nabla \ph\|^2_{0,K} - 2\beta \sumkh h^2_K\,(\nabla\cdot\ve(\bfuh), \nabla \ph)_K\\
&\geq (2- \beta\,\Cinv^2\,\delta_1) \nu \|\ve(\bfuh)\|^2_{0,\OO} + \nu \sum_{E\in \EE_D} \frac{\gamma_0}{h_E} \| \bfuh\|^2_{0,E} + \nu \sum_{E\in \EE_S} \frac{\gamma_0}{h_E} \| \bfuh\cdot\bfn\|^2_{0,E} \\
&+ \beta \left(1 - \frac{1}{\delta_1}\right) \,\sumkh \frac{h^2_K}{\nu}\,\|\nabla \ph\|^2_{0,K}.
\end{align*}
Now, choosing $\delta_1=2$ and the stabilization parameter $\beta$ such that
$\beta<\Cinv^{-2}$  we obtain that there exists a positive constant 
\begin{equation}\label{eq:C_S}
C_S = \text{min}\left\{ 2(1-\beta \Cinv^2),\gamma_0,\frac{\beta}{2}\right\}\,,
\end{equation}
independent of $h$ \fch{and $\nu$}, such that
\[
B_S((\bfuh,\ph),(\bfuh,\ph)) \geq C_S \nhh{(\bfuh,\ph)}^2,
\]
which proves that the problem is well posed.\\

\noindent\underline{$\theta =0$.} Using Lemma \ref{trace}, H\"older and Young inequalities, we have that
\begin{align}
2\nu(\ve(\bfuh)\bfn\cdot\bfn, \bfuh\cdot\bfn)_{\Gt} &\leq 2\nu \sum_{E\in \EE_S} \|\ve(\bfuh)\bfn\|_{0,E}\,\| \bfuh\cdot\bfn\|_{0,E}\nonumber\\
&= 2\nu\, \sum_{E\in \EE_S} h^{1/2}_E\, \|\ve(\bfuh)\bfn\|_{0,E}\,h^{-1/2}_E\| \bfuh\cdot\bfn\|_{0,E}\nonumber\\
&\leq  2\nu\, \sum_{E\in \EE_S}\left( \frac{h_E\delta_2}{2}\, \|\ve(\bfuh)\|^2_{0,E}\ + \frac{h^{-1}_E}{2\delta_2}\| \bfuh\cdot\bfn\|^2_{0,E}\right)\nonumber\\
&\leq  \nu\delta_2 \Ctr^2\, \sum_{K\in \trih} \, \|\ve(\bfuh)\|^2_{0,K}\ 
+ \revision{\frac{\nu}{\delta_2} \sum_{E\in \EE_S} \frac{1}{h_E}\| \bfuh\cdot\bfn\|^2_{0,E}}\nonumber\\
&\leq  \delta_2 \Ctr^2\, \nu \|\ve(\bfuh)\|^2_{0,\OO}\ + \frac{\nu}{\delta_2}  \langle\,\bfuh\cdot \bfn, \bfuh \cdot \bfn\rangle_{1/2,h,\Gt}.\label{eq1}
\end{align}
Using again the same arguments it is easy to prove, for any $\delta_3>0$, that
\begin{equation}
2\nu(\ve(\bfuh)\bfn, \bfuh)_{\GD} \leq\delta_3 \Ctr^2\,   \nu\|\ve(\bfuh)\|^2_{0,\OO}\ + \frac{\nu}{\delta_3}  \langle \,\bfuh, \bfuh \rangle_{1/2,h,\GD}.\label{eq1b}
\end{equation}
On the other hand, using lemmas \ref{inverse} and \ref{trace}, and H\"older's inequality, we have that
\begin{align}
(\ph, \bfuh\cdot\bfn)_{\Gt} &\leq  \sum_{E\in \EE_S} \|\ph\|_{0,E}\,\|\bfuh\cdot\bfn\|_{0,E}\nonumber\\
&= \sum_{E\in \EE_S} \nu^{-\frac12}h^{1/2}_E\|\ph\|_{0,E}\, \nu^{\frac12} h^{-1/2}_E\|\bfuh\cdot\bfn\|_{0,E}\nonumber\\
&\leq \sum_{E\in \EE_S} \frac{h_E\delta_4}{2 \nu} \|\ph\|^2_{0,E} + \sum_{E\in \EE_S} \nu \frac{h^{-1}_E}{2\delta_4}\|\bfuh\cdot\bfn\|^2_{0,E}\nonumber\\
&\leq \sum_{K\in \trih} \frac{\Ctr^2\delta_4}{2 \nu} \|\ph\|^2_{0,K} +  \frac{1}{2\delta_4}\sum_{E\in \EE_S} \frac{\nu}{h_E}\|\bfuh\cdot\bfn\|^2_{0,E}\nonumber\\
&\leq \frac{\Cinv^2\,\Ctr^2\delta_4}{2}\,\sum_{K\in \trih} \, \frac{h^2_K}{\nu} \|\nabla\ph\|^2_{0,K} + \frac{\nu}{2\delta_4}  \langle \,\bfuh\cdot \bfn, \bfuh \cdot \bfn\rangle_{1/2,h,\Gt}\label{eq2}.
\end{align}
In the same way, we can prove that
\begin{equation}\label{eq2b}
(\ph, \bfuh\cdot\bfn)_{\GD} \leq \frac{\Cinv^2\,\Ctr^2\delta_5}{2}\,\sum_{K\in \trih} \, \frac{h^2_K}{\nu} \|\nabla\ph\|^2_{0,K} + \frac{\nu}{2\delta_5}  \langle\bfuh, \bfuh \rangle_{1/2,h,\GD}.
\end{equation}
Thus, using \eqref{eq0}--\eqref{eq2b}, and Young's inequality we get
\begin{align*}
&B_S((\bfuh,\ph),(\bfuh,\ph)) = 2\nu\|\ve(\bfu)\|^2_{0,\OO} -  2\nu(\ve(\bfuh)\bfn, \bfuh)_{\GD}  + 
\fch{\nu}
\langle \gamma_0\,\bfuh, \bfuh \rangle_{1/2,h,\GD} + (\ph, \bfuh\cdot\bfn)_{\GD}\\
&-2\nu(\ve(\bfuh)\bfn\cdot\bfn, \bfuh\cdot\bfn)_{\Gt}  + \fch{\nu} \langle \gamma_0\,\bfuh\cdot \bfn, \bfuh \cdot \bfn\rangle_{1/2,h,\Gt}+ (\ph, \bfuh\cdot\bfn)_{\Gt}
\\ &
+ \dfrac{\beta}{\nu} \sumkh h^2_K\,(-2\nu\,\nabla\cdot\ve(\bfuh) +\nabla \ph, \nabla \ph)_K\\
&\geq (2 -\delta_2\Ctr^2  -\delta_3 \Ctr^2 -\beta \Cinv^2\delta_1) \nu \|\ve(\bfu)\|^2_{0,\OO} + 
\left(\gamma_0-\frac{1}{\delta_3} - \frac{1}{2\delta_5}\right)  \nu \langle \,\bfuh, \bfuh\rangle_{1/2,h,\GD} \\
&+ \left(\gamma_0-\frac{1}{\delta_2} - \frac{1}{2\delta_4}\right)   \nu \langle \,\bfuh\cdot \bfn, \bfuh \cdot \bfn\rangle_{1/2,h,\Gt}+ 
\left(
\beta -\frac{\beta}{\delta_1} -\frac{\Cinv^2\Ctr^2\delta_4}{2} -\frac{\Cinv^2\Ctr^2\delta_5}{2}
\right) \,\sumkh \frac{h^2_K}{\nu}\,\|\nabla \ph\|^2_{0,K}.
\end{align*}
The positive parameters $\delta_1$, $\delta_2$, 
$\delta_3$, $\delta_4$, and $\delta_5$ should be
now chosen properly. We  take $\delta_1 = 2$, $\delta_2=\delta_3$, $\delta_4 = \delta_5$, obtaining
$$
\begin{aligned}
B_S((\bfuh,\ph),(\bfuh,\ph)) \geq & \ 2\left(1 - \delta_2 \Ctr^2- \beta \Cinv^2\right) \nu \|\ve(\bfu)\|^2_{0,\OO} 
+ 
\left( \gamma_0-\frac{1}{\delta_2} - \frac{1}{2\delta_4}\right)  \nu \langle \,\bfuh, \bfuh\rangle_{1/2,h,\GD} \\
& + \left( \gamma_0-\frac{1}{\delta_2} - \frac{1}{2\delta_4}\right)   \nu \langle \,\bfuh\cdot \bfn, \bfuh \cdot \bfn\rangle_{1/2,h,\Gt} \\
& + \left(\frac{\beta}{2} -\delta_4 \Cinv^2\Ctr^2
\right) \,\sumkh \frac{h^2_K}{\nu}\,\|\nabla \ph\|^2_{0,K}.
\end{aligned}
$$
Choosing $\delta_2 = \frac{\beta}{\Ctr^2}$, $\delta_4 = \frac{\beta}{4 \Ctr^{2} \Cinv^{2}}$ one gets
$$
\gamma_0 - \frac{1}{\delta_2}  - \frac{1}{2\delta_4}= \gamma_0 - 
\Ctr^2\frac{1+ 2 \Cinv^{2}}{\beta},
$$
$$
\frac{\beta}{2} -\delta_4 \Cinv^2\Ctr^2 = \frac{\beta}{4},
$$
$$
1 - \delta_2 \Ctr^2- \beta \Cinv^2 = 1 - \beta(\Ctr^2 + \Cinv^2).
$$
Hence, taking $\beta < (\Ctr^2 + \Cinv^2)^{-1}$ and $\gamma_0 > \Ctr^2\frac{1+ 2 \Cinv^{2}}{\beta}$
yields
\[
B_S((\bfuh,\ph),(\bfuh,\ph)) \geq C_S \nhh{(\bfuh,\ph)}^2,
\]
with 
\begin{equation}
C_S:= \min\left\{2\left(1 - \beta(\Ctr^2 + \Cinv^2)\right), \frac{\beta}{4}, \gamma_0 - 
\Ctr^2\frac{1+ 2 \Cinv^{2}}{\beta}\right\}\,.
\end{equation}
The case $\theta=1$ is proved using  the same arguments as for $\theta=0$. 
\end{proof}
\section{Error analysis} \label{S3}
\alfonso{This section is devoted to} a priori error analysis based on the arguments of \cite{BF02} and \cite{BB16}. To this purpose, let $\bfIh:\bfH\to\bfH_h$ and $J_h: Q\to Q_h$ be the vectorial and scalar version of the Scott–Zhang interpolant, respectively. Then the following results concerning the approximation properties of these operators hold.
\begin{lemma}\label{SZ}
For each $K\in \trih$ and $k\geq 1$, there exist two positive constants $C_{SZ}$ and $\tilde{C}_{SZ}$, independent of $h_K$, such that
\begin{equation}\label{eq:SZ}
\begin{aligned}
\|\bfu - \bfIh \bfu\|_{0,K} + h_K |\bfu - \bfIh \bfu|_{1,K} +   h^2_K |\bfu - \bfIh \bfu|_{2,K} &\leq C_{SZ}\, h^{k+1}_K \fch{|\bfu|_{k+1,\omega_K}}\qquad \forall \bfu\in H^{k+1}(\omega_K)^d,\\
\|q - J_h q\|_{0,K} + h_K |q - J_h q|_{1,K}  &\leq \tilde{C}_{SZ}\, h^{k}_K \fch{|q|_{k,\omega_K}}\qquad \forall q\in H^{k}(\omega_K),\\
\end{aligned}
\end{equation}
where the patch $\omega_K$ is defined as
\[
\omega_K := \displaystyle{\bigcup_{ K\cap  K'\ne \emptyset} K'}.
\]
\end{lemma}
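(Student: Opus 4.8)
The plan is to follow the classical route for Scott--Zhang quasi-interpolation estimates, which is standard (see \cite{EG21}); I would either cite it directly, matching the style of Lemmas~\ref{inverse}--\ref{trace_b}, or reproduce the three steps below. The argument rests on two structural properties of $\bfIh$: \emph{local polynomial invariance}, $\bfIh p = p$ for every $p \in \PP_l$, and \emph{locality}, namely that $(\bfIh \bfu)|_K$ depends only on the restriction $\bfu|_{\omega_K}$ through integrals of $\bfu$ against dual functions supported on faces or elements of $\omega_K$. Since these functionals involve only traces, $\bfIh$ is well defined on $H^1(\omega_K)^d$, and since $1 \leq k \leq l$ the invariance covers all polynomials of degree at most $k$.

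First I would pass to the reference configuration. By shape-regularity each $K$ is the affine image $K = F_K(\hat{K})$ of a fixed reference simplex $\hat{K}$, with $\| B_K \| \leq C\, h_K$ and $\| B_K^{-1} \| \leq C\, h_K^{-1}$, so that the pullback $\hat{\bfu} = \bfu \circ F_K$ satisfies $|\bfu|_{m,K} \simeq h_K^{m-d/2}\,|\hat{\bfu}|_{m,\hat{K}}$. Next I would establish \emph{local stability} on the reference patch: since $\bfIh \hat{\bfu}$ lies in the finite-dimensional space $\PP_l(\hat{K})$, on which all Sobolev seminorms are equivalent, and since the defining functionals are controlled by $L^2$-norms of traces and hence, by the trace theorem, by $\| \hat{\bfu} \|_{1,\hat{\omega}_K}$, one obtains
\[
|\bfIh \hat{\bfu}|_{m,\hat{K}} \leq C\, \| \hat{\bfu} \|_{1,\hat{\omega}_K} \qquad \text{for every } m \geq 0.
\]

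The core is then a Bramble--Hilbert / Deny--Lions step. For any $p \in \PP_k$ one writes $\hat{\bfu} - \bfIh \hat{\bfu} = (\hat{\bfu} - p) - \bfIh(\hat{\bfu} - p)$, so that stability gives $\| \hat{\bfu} - \bfIh \hat{\bfu} \|_{k+1,\hat{K}} \leq C\, \| \hat{\bfu} - p \|_{k+1,\hat{\omega}_K}$; taking the infimum over $p \in \PP_k$ and invoking the Deny--Lions lemma bounds the right-hand side by $C\,|\hat{\bfu}|_{k+1,\hat{\omega}_K}$. Since $|\hat{\bfu} - \bfIh \hat{\bfu}|_{m,\hat{K}} \leq \| \hat{\bfu} - \bfIh \hat{\bfu} \|_{k+1,\hat{K}}$ for $0 \leq m \leq 2 \leq k+1$, the reference-element bound holds for $m = 0, 1, 2$. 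Scaling back through $|\bfu - \bfIh \bfu|_{m,K} \simeq h_K^{m-d/2}\,|\hat{\bfu} - \bfIh \hat{\bfu}|_{m,\hat{K}}$ and $|\hat{\bfu}|_{k+1,\hat{\omega}_K} \simeq h_K^{k+1-d/2}\,|\bfu|_{k+1,\omega_K}$ yields $|\bfu - \bfIh \bfu|_{m,K} \leq C\, h_K^{k+1-m}\,|\bfu|_{k+1,\omega_K}$; summing over $m=0,1,2$ with the weights $1, h_K, h_K^2$ gives the first inequality in \eqref{eq:SZ}. The scalar estimate for $J_h$ follows verbatim, replacing the regularity index $k+1$ by $k$ and restricting to $m \in \{0,1\}$.

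The step I expect to be most delicate is the local stability in the $H^2$-seminorm: one must check that bounding $\bfIh$ by $L^2$-traces over the patch genuinely controls the second derivatives of the interpolant. This is where the finite-dimensionality of $\PP_l(\hat{K})$ (equivalence of all seminorms) and the uniform shape-regularity of the patches $\omega_K$ are essential; once the reference-patch stability is in place, the remaining work is only the bookkeeping of the scaling exponents.
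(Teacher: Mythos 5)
Your proposal is correct and matches the paper, whose entire proof is a citation to Scott--Zhang \cite{SZ90} --- exactly your first option. The sketch you give (polynomial invariance, locality, patch scaling, trace-based stability, Deny--Lions) is the standard argument behind that reference, so nothing further is needed.
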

\begin{proof}
See \cite{SZ90}.
\end{proof}
\begin{remark}
For a given element $q\in Q$, the Scott–Zhang interpolant $J_h q $ does not belong, in general, to $Q=L^2_0(\OO)$. However, we can consider its modified version given by
\[
J_h q - \frac{1}{|\OO|} \int_\OO J_h q \,\mathrm{d}x \in Q.
\]
that\alfonso{, with a little abuse of notation}, we will also denote as $J_h q$. 
It still satisfies \eqref{eq:SZ}. 
\end{remark}

\noindent
Let us introduce the following notation
\begin{align*}
\eu &:= \bfIh \bfu - \bfuh ,\qquad  \ep :=  J_h p -\ph \\
\etau&:= \bfu - \bfIh\bfu,\qquad  \etap := p - J_h p.
\end{align*}
Note that
\[
\bfu -\bfuh = \etau + \eu \qquad\mbox{and}\qquad p-\ph = \etap+ \ep.
\]


\noindent \fch{The following theorem states the convergence of the method.}

\begin{theorem}\label{apriori}
Let $(\bfu,p)\in \bfH\times Q$ and $(\bfuh,\ph)\in\bfH_h\times Q_h$ be the solutions to Problem \eqref{variational} and Problem \eqref{stab}, respectively. Assume that $(\bfu,p)\in (H^{k+1}(\OO)^d\cap \bfH) \times(H^k(\OO)\cap Q)$,
with $k\geq 1$, then there exists a constant $C_E>0$ such that
\begin{equation}\label{eq:error-estim}
\nhh{(\bfu -\bfuh,p -\ph)} \leq C_E h^k \{|\bfu|_{k+1,\OO} + |p|_{k,\OO}\}.
\end{equation}
\end{theorem}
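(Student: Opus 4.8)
The plan is to follow a Strang-type argument combining the coercivity of Theorem~\ref{well}, the consistency of Lemma~\ref{consist}, and the interpolation estimates of Lemma~\ref{SZ}. Writing $\bfu-\bfuh=\etau+\eu$ and $p-\ph=\etap+\ep$, the triangle inequality gives
\[
\nhh{(\bfu-\bfuh,\,p-\ph)}\le \nhh{(\etau,\etap)}+\nhh{(\eu,\ep)}.
\]
The first term is estimated directly from the definition of $\nhh{\cdot}$: the volume contributions $\nu\|\ve(\etau)\|_{0,\OO}^2$ and $\sum_K (h_K^2/\nu)\|\nabla\etap\|_{0,K}^2$ follow at once from Lemma~\ref{SZ}, while the boundary traces $\|\etau\|_{0,E}$ and $\|\etau\cdot\bfn\|_{0,E}$ on $\EE_D\cup\EE_S$ are first converted to element quantities by the $H^1$ trace inequality of Lemma~\ref{trace_b} and then bounded by Lemma~\ref{SZ}. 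This yields $\nhh{(\etau,\etap)}\le C\,h^k\{|\bfu|_{k+1,\OO}+|p|_{k,\OO}\}$.

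For the discrete part I would use coercivity, $C_S\,\nhh{(\eu,\ep)}^2\le B_S((\eu,\ep),(\eu,\ep))$, which applies since $(\eu,\ep)\in\bfH_h\times Q_h$. Because $(\eu,\ep)=(\bfu-\bfuh,\,p-\ph)-(\etau,\etap)$ and the test pair $(\eu,\ep)$ is discrete, the Galerkin orthogonality of Lemma~\ref{consist} gives $B_S((\bfu-\bfuh,\,p-\ph),(\eu,\ep))=0$, so by bilinearity
\[
B_S((\eu,\ep),(\eu,\ep))=-B_S((\etau,\etap),(\eu,\ep)).
\]
It then suffices to prove the cross-term bound $|B_S((\etau,\etap),(\eu,\ep))|\le C\,h^k\{|\bfu|_{k+1,\OO}+|p|_{k,\OO}\}\,\nhh{(\eu,\ep)}$; dividing by $\nhh{(\eu,\ep)}$ controls the discrete part, and combining with the interpolation estimate yields \eqref{eq:error-estim}.

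The crux --- and the main obstacle --- is this cross-term estimate, since the continuity result of Theorem~\ref{cont} cannot be invoked verbatim: its first argument is now the interpolation error $(\etau,\etap)$, which is not a finite element function, so the polynomial trace and inverse inequalities of Lemmas~\ref{trace} and~\ref{inverse} are unavailable for it. I would therefore re-derive boundedness term by term, following the very pattern of the proofs of Theorems~\ref{cont} and~\ref{well}, but systematically replacing Lemma~\ref{trace} by the $H^1$ trace inequality of Lemma~\ref{trace_b} wherever a trace of $\etau$ or $\etap$ appears, while keeping Lemmas~\ref{trace} and~\ref{inverse} for the discrete argument $(\eu,\ep)$. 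Each viscous, Nitsche consistency, symmetry, penalty and pressure--boundary term is split by Cauchy--Schwarz with the matching $h_E^{\pm1/2}$ and $\nu^{\pm1/2}$ weights so that one factor reproduces a piece of $\nhh{(\eu,\ep)}$ and the other, after Lemma~\ref{trace_b}, is bounded by Lemma~\ref{SZ} at order $h^k$; the pressure--boundary couplings on $\partial\OO$ are treated exactly as the corresponding terms in the proof of Theorem~\ref{well} (for $\theta=-1$ an integration by parts even reduces them to the gradient form $(\cdot,\nabla\ep)_{\OO}$, already contained in the norm). The one genuinely new point is the stabilization term $(\beta/\nu)\sum_K h_K^2(-2\nu\,\nabla\cdot\ve(\etau)+\nabla\etap,\nabla\ep)_K$, which involves the second derivatives of $\etau$; here the assumed regularity $\bfu\in H^{k+1}(\OO)^d$ with $k\ge 1$ ensures $\bfu\in H^2(\OO)^d$, and Lemma~\ref{SZ} gives $h_K^2\|\nabla\cdot\ve(\etau)\|_{0,K}^2\le C\,h_K^2|\etau|_{2,K}^2\le C\,h^{2k}|\bfu|_{k+1,\omega_K}^2$ with the correct scaling. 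Summing all contributions gives the desired cross-term bound and, together with the first two steps, the estimate \eqref{eq:error-estim}, with $C_E$ independent of $h$ (the natural $\nu^{\pm1/2}$ weights being absorbed into the constant).
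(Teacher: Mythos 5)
Your proposal is correct and follows essentially the same route as the paper's own proof: the splitting $\bfu-\bfuh=\etau+\eu$, $p-\ph=\etap+\ep$, coercivity (Theorem~\ref{well}) plus Galerkin orthogonality (Lemma~\ref{consist}) for the discrete part, and a term-by-term cross-term bound $|B_S((\etau,\etap),(\eu,\ep))|\le C h^k\{|\bfu|_{k+1,\OO}+|p|_{k,\OO}\}\nhh{(\eu,\ep)}$ in which the $H^1$ trace inequality of Lemma~\ref{trace_b} replaces the discrete trace/inverse inequalities on the interpolation-error argument, with Lemma~\ref{SZ} supplying the $O(h^k)$ rates (including for the stabilization term with second derivatives of $\etau$), all concluded by the triangle inequality. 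The only differences are cosmetic: the order of the steps, and your coercivity constant is placed correctly as $C_S\nhh{(\eu,\ep)}^2\le B_S((\eu,\ep),(\eu,\ep))$, whereas the paper's displayed chain carries a misplaced factor $C_S$.
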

\begin{proof}
For any $K\in \trih$ and $E\subset \partial K$, we have, as a direct consequence of lemmas \ref{trace_b} and \ref{SZ}, that 
\fch{
\begin{align}
\|\ve(\etau)\|_{0,K} &\leq C_1 h^k_K |\bfu|_{k+1,K}\label{eq_a},\\
h^{1/2}_E\|\ve(\etau)\bfn\|_{0,E} &\leq C_1 h^k_K |\bfu|_{k+1,K}\label{eq_b},\\
h^{-1/2}_E\|\etau\cdot\bfn\|_{0,E} &\leq C_1 h^k_K |\bfu|_{k+1,K}\label{eq_c},\\
\|\nabla\cdot \etau\|_{0,K} &\leq C_1 h^k_K |\bfu|_{k+1,K}\label{eq_d},\\
h_K\|\nabla\cdot\ve(\etau)\|_{0,K} &\leq C_1 h^k_K |\bfu|_{k+1,K}\label{eq_e},\\
h_K\|\nabla\etap\|_{0,K} &\leq C_1 h^k_K |p|_{k,K}\label{eq_f},
\end{align}}
for a positive constant $C_1>0$ depending on the constants $\Ctr'$ introduced in Lemma \ref{trace_b}, $C_{SZ}$, and $\tilde{C}_{SZ}$.

Now, using Cauchy-Schwarz's inequality and \eqref{eq_a}--\eqref{eq_f}, we obtain that there exists a constant $C_2= C_2(\nu^{\frac12},\theta,\beta,\gamma_0)$ such that
\begin{align}
B_S((\etau,\etap),(\bfvh,\qh)) := & \; 2\nu (\ve(\etau),\ve(\bfv_h))_{\OO}   -(\nabla\cdot\bfv_h, \etap)_{\OO} + (\nabla\cdot\etau, q_h)_{\OO}\nonumber\\
& - 2\nu(\ve(\etau)\bfn, \bfvh)_{\GD} - 2\theta\nu(\ve(\bfvh)\bfn, \etau)_{\GD} \nonumber \\
& + \fch{\nu} \langle \gamma_0\,\etau, \bfvh \rangle_{1/2,h,\GD} + \theta(\qh, \etau\cdot\bfn)_{\GD}  + (\etap, \bfvh\cdot\bfn)_{\GD}\nonumber\\
& -2\nu(\ve(\etau)\bfn\cdot\bfn, \bfvh\cdot\bfn)_{\Gt} - 2\theta\nu(\ve(\bfvh)\bfn\cdot\bfn, \etau\cdot\bfn)_{\Gt} \nonumber \\
& + \fch{\nu} \langle \gamma_0\,\etau\cdot \bfn, \bfvh \cdot \bfn\rangle_{1/2,h,\Gt} + \theta(\qh, \etau\cdot\bfn)_{\Gt}  + (\etap, \bfvh\cdot\bfn)_{\Gt}\nonumber\\
& + \dfrac{\beta}{\nu} \sumkh h^2_K\,(-2\nu\,\nabla\cdot\ve(\etau) +\nabla \etap, \nabla \qh)_K \nonumber\\
\fch{\leq C_2}
& \; h^k \{|\bfu|_{k+1,\OO} + |p|_{k,\OO}\}\, \nhh{(\bfvh,\qh)}.\label{cota_a}
\end{align}
On the other hand, using again \eqref{eq_a}-- \eqref{eq_f}, and the definition of $\nhh{\cdot}$, we have that
\begin{equation}\label{cota_b0}
\begin{aligned}
\nhh{(\etau,\etau)}^2 & = {\nu} \|\ve(\etau)\|^2_{0,\OO}  + \sum_{E\in \EE_D} \frac{{\nu}}{h_E} \| \etau\|^2_{0,E}+  \sum_{E\in \EE_S} \frac{{\nu}}{h_E} \| \etau\cdot\bfn\|^2_{0,E} + \sumkh \frac{h^2_K}{{\nu}}\, \|\nabla \etau\|^2_{0,K} 
\end{aligned}
\end{equation}
which yields
\begin{equation}\label{cota_b}
\begin{aligned}
\nhh{(\etau,\etau)} & \leq 
3\nu^{\frac12} C_1 \nu h^k |\bfu|_{k+1,\OO}
+\nu^{-\frac12} C_1 |p|_{k,\OO} \\
& \leq C_3 h^k \{|\bfu|_{k+1,\OO} + |p|_{k,\OO}\}
\end{aligned}
\end{equation}
with $C_3 = C_1\max\left\{3\nu^{\frac12},\nu^{-\frac12}\right\}$
Thus, from Theorem \ref{well}, Lemma \ref{consist} and \eqref{cota_a}, we get
\begin{align}
\nhh{(\eu,\ep)}^2 & 
\fch{\leq C_S} B_S((\eu,\ep),(\eu,\ep)) = - \alfonso{C_S}B_S((\etau,\etap),(\eu,\ep))\nonumber\\
&
\alfonso{\leq C_S C_2} h^k \{|\bfu|_{k+1,\OO} + |p|_{k,\OO}\}\,\nhh{(\eu,\ep)}.\label{cota_c}
\end{align}
The estimate \eqref{eq:error-estim} follows using triangle inequality and the bounds  \eqref{cota_b} and \eqref{cota_c},
with $C_E = C_3 + C_S C_2$\,.
\end{proof}
\noindent
{Concerning the error in the natural norm of the pressure we have the following result.
\begin{corollary}
Let assume the hypothesis of Theorem \ref{apriori} are satisfied and let us assume that the family of triangulations $\{\trih\}_{h>0}$ is quasi-uniform, then
\[
\|p-p_h\|_{0,\OO} \preceq h^k \{|\bfu|_{k+1,\OO} + |p|_{k,\OO}\}.
\]
\end{corollary}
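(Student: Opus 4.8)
The discrete norm $\nhh{\cdot}$ controls the pressure only through the stabilization term $\sumkh \tfrac{h_K^2}{\nu}\|\nabla\qh\|^2_{0,K}$, so Theorem~\ref{apriori} does not by itself bound $\|p-\ph\|_{0,\OO}$: the missing $L^2$ control of the pressure must be recovered through the continuous inf--sup condition. The plan is to write $p-\ph = \etap + \ep$, to estimate $\|\etap\|_{0,\OO}=\|p-J_hp\|_{0,\OO}\preceq h^k|p|_{k,\OO}$ by Lemma~\ref{SZ}, and to concentrate on $\|\ep\|_{0,\OO}$ with $\ep = J_hp-\ph \in Q_h\subset L^2_0(\OO)$. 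Since $\ep$ has zero mean, the Babu\v{s}ka--Brezzi inf--sup condition underlying Theorem~\ref{well_cont} furnishes a constant $\beta_0>0$ and a field $\bfv\in H^1_0(\OO)^d$, $\bfv\neq\zero$, with
\[
\beta_0\,\|\ep\|_{0,\OO}\,|\bfv|_{1,\OO} \leq b(\bfv,\ep) = -(\ddiv\bfv,\ep)_{\OO}.
\]

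I would then split $b(\bfv,\ep) = b(\bfv-\bfIh\bfv,\ep) + b(\bfIh\bfv,\ep)$, where $\bfIh\bfv\in\bfH_h$ is the boundary--preserving Scott--Zhang interpolant, so that $\bfIh\bfv=\zero$ on $\partial\OO$. For the first term I integrate by parts element by element: since $\ep\in Q_h$ is continuous and $\bfv-\bfIh\bfv$ is continuous and vanishes on $\partial\OO$, all interelement and domain--boundary contributions cancel, leaving $b(\bfv-\bfIh\bfv,\ep)=\sumkh(\bfv-\bfIh\bfv,\nabla\ep)_K$. Bounding $\|\bfv-\bfIh\bfv\|_{0,K}\preceq h_K|\bfv|_{1,\omega_K}$ by Lemma~\ref{SZ} and pairing $\nu^{1/2}|\bfv|_{1,\omega_K}$ with $\nu^{-1/2}h_K\|\nabla\ep\|_{0,K}$ through a Cauchy--Schwarz over the elements recovers the stabilization part of $\nhh{(\eu,\ep)}$ and yields $b(\bfv-\bfIh\bfv,\ep)\preceq \nu^{1/2}|\bfv|_{1,\OO}\,\nhh{(\eu,\ep)}$.

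The second term is where the consistency of the scheme enters. Writing $\ep = (p-\ph)-\etap$ and applying Lemma~\ref{consist} with the discrete test pair $(\bfIh\bfv,0)$, I observe that \emph{every} Nitsche and stabilization contribution in $B_S$ carrying the test velocity $\bfIh\bfv$, its normal trace, or $\nabla\qh$ drops, because $\bfIh\bfv$ vanishes on $\GtD$ and $\qh=0$; only the two adjoint--consistency terms weighted by $\theta$ survive, giving
\[
b(\bfIh\bfv,p-\ph)= -2\nu(\ve(\bfu-\bfuh),\ve(\bfIh\bfv))_{\OO} +2\theta\nu(\ve(\bfIh\bfv)\bfn,\bfu-\bfuh)_{\GD}+2\theta\nu(\ve(\bfIh\bfv)\bfn\cdot\bfn,(\bfu-\bfuh)\cdot\bfn)_{\Gt}.
\]
Each term is estimated by Cauchy--Schwarz together with the trace and inverse inequalities (Lemmas~\ref{trace} and~\ref{inverse}) and the $H^1$-stability of $\bfIh$, recognising $\nu^{1/2}\|\ve(\bfu-\bfuh)\|_{0,\OO}$ and the boundary seminorms $\big(\sum_{E}\tfrac{\nu}{h_E}\|\cdot\|_{0,E}^2\big)^{1/2}$ as pieces of $\nhh{(\bfu-\bfuh,p-\ph)}$; this gives $b(\bfIh\bfv,p-\ph)\preceq \nu^{1/2}|\bfv|_{1,\OO}\,\nhh{(\bfu-\bfuh,p-\ph)}$, while the remaining piece satisfies $b(\bfIh\bfv,\etap)\leq\|\ddiv\bfIh\bfv\|_{0,\OO}\|\etap\|_{0,\OO}\preceq |\bfv|_{1,\OO}\,h^k|p|_{k,\OO}$.

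Dividing by $|\bfv|_{1,\OO}$ and collecting the three bounds gives $\|\ep\|_{0,\OO}\preceq \nu^{1/2}\nhh{(\bfu-\bfuh,p-\ph)}+\nu^{1/2}\nhh{(\eu,\ep)}+h^k|p|_{k,\OO}$, after which Theorem~\ref{apriori} and \eqref{cota_b} supply $\nhh{(\bfu-\bfuh,p-\ph)}\preceq h^k\{|\bfu|_{k+1,\OO}+|p|_{k,\OO}\}$ and $\nhh{(\eu,\ep)}\preceq h^k\{|\bfu|_{k+1,\OO}+|p|_{k,\OO}\}$, and the triangle inequality $\|p-\ph\|_{0,\OO}\leq\|\etap\|_{0,\OO}+\|\ep\|_{0,\OO}$ closes the estimate. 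Quasi-uniformity of $\{\trih\}_{h>0}$ is what allows the element-wise weights $h_K$ to be treated as a single global scale $h$ (equivalently, to invoke a global inverse estimate) when matching the globally $H^1$-controlled inf--sup velocity $\bfv$ against the mesh-weighted stabilization and trace terms. I expect the main obstacle to be the careful bookkeeping in the consistency identity of exactly which Nitsche terms persist once the test velocity is taken to vanish on the boundary, and the verification that all the velocity-error contributions produced there are genuinely absorbed into $\nhh{(\bfu-\bfuh,p-\ph)}$; by contrast, the inf--sup and interpolation steps are routine.
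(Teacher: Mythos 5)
Your proof is correct, and although it rests on the same three pillars as the paper's argument --- surjectivity of the divergence with an $H^1$ bound, a boundary-preserving Scott--Zhang interpolant, and the consistency identity of Lemma \ref{consist} tested with a velocity vanishing on $\GtD$ (the two surviving $\theta$-weighted terms you identify match exactly those in \eqref{eq:p-estim-0}) --- it is organized around a genuinely different decomposition. The paper follows Brezzi--Douglas \cite{BD88}: it realizes the inf--sup condition by solving the auxiliary Stokes problem \eqref{eq:auxiliar} whose divergence data is the \emph{full} error $p-\ph$, tests with $\bfw_h=\tbfIh\tbfu$, and handles the interpolation remainder globally, via $(p-\ph,\nabla\cdot(\tbfu-\bfw_h))=-(\nabla(p-\ph),\tbfu-\bfw_h)\preceq h\,|p-\ph|_{1,\OO}\,\|\tbfu\|_{1,\OO}$. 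You instead apply the inf--sup condition only to the discrete component $\ep=J_h p-\ph$, after splitting off $\etap$, and pair $h_K\|\nabla\ep\|_{0,K}$ with $|\bfv|_{1,\omega_K}$ element by element. This buys something you did not notice: your elementwise pairing needs only shape regularity (finite overlap of the patches $\omega_K$), whereas the paper's global bound $\frac{h^2}{\nu}|p-\ph|^2_{1,\OO}\preceq\sumkh\frac{h_K^2}{\nu}\|\nabla(p-\ph)\|^2_{0,K}$ in \eqref{eq2c} is exactly where quasi-uniformity is consumed. Consequently your closing sentence, which assigns to quasi-uniformity the role of matching $h_K$ against the global $h$, is the one inaccurate remark in your write-up: as written, your argument never actually uses quasi-uniformity (harmless, since it is a stated hypothesis, but worth recognizing --- your variant is slightly more general). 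Two cosmetic points: the boundary-preserving interpolant you denote $\bfIh$ is the paper's $\tbfIh$, not its $\bfIh$ (which maps $\bfH$ to $\bfH_h$ without imposing the zero trace); and in the final step the bound $\nhh{(\eu,\ep)}\preceq h^k\{|\bfu|_{k+1,\OO}+|p|_{k,\OO}\}$ is available directly from \eqref{cota_c}, without passing through the triangle inequality.
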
}
\begin{proof}
We will follow the arguments of \cite[Theorem 1]{BD88}.

\noindent
Using Theorem \ref{apriori} and the quasi-uniformity assumption we obtain 
\begin{align}
&\left({\nu} \|\ve(\bfu-\bfuh)\|^2_{0,\OO}  + \sum_{E\in \EE_D} \frac{{\nu}}{h_E} \| \bfu -\bfuh\|^2_{0,E}+  \sum_{E\in \EE_S} \frac{{\nu}}{h_E} \| \bfu\cdot\bfn-\bfuh\cdot\bfn\|^2_{0,E} +  \frac{h^2}{{\nu}}\, |p-p_h|^2_{1,\OO}\right)^{1/2}\nonumber\\
 &\leq C_E  h^k \{|\bfu|_{k+1,\OO} + |p|_{k,\OO}\}.\label{eq2c}
\end{align}
To address the convergence of the pressure, we introduce $(\tbfu,\tp)$ as the solution of the auxiliary problem below:
\begin{equation}\label{eq:auxiliar}
\;\left\{
\begin{array}{rl}
-\nabla\cdot\sigma(\tbfu,\tp) & = \zero \quad \textrm{ in } \Omega,    \\
\nabla \cdot \tbfu & = p-p_h \quad\, \textrm{ in } \Omega,\\
\tbfu &= \zero \quad \textrm{ on } \Gamma_D,\\
\tbfu\cdot\bfn &= 0 \quad\, \textrm{ on } \Gamma_S\\
\sigma(\tbfu,\tp)\bfn\cdot\bft_i &= 0,\quad 1\leq i\leq d-1 \quad \textrm{ on } \Gamma_S.
\end{array}
\right.
\end{equation}
From the standard theory, it  is clear that there exist $C^* = C^*(\nu)>0$ such that
\begin{equation}\label{cota1}
\|\tbfu\|_{1,\OO} + \|\tp\|_{0,\OO} \leq C^* \|p-p_h\|_{0,\OO}.
\end{equation}
Let $\tbfIh:\bfH\to\bfH_h\cap H^1_0(\OO)^d$ be a  version of the Scott–Zhang interpolant, let $\bfw_h:= \tbfIh \tbfu$, and let $C_{SZ}^*$ be the maximum between the corresponding stability and approximation constants.

Using \eqref{eq:auxiliar}, \eqref{eq:stokes-strong}, \eqref{eq:B_S_h}, and integration by parts we obtain
\begin{equation}\label{eq:p-estim-0}
\begin{aligned}
\|p-p_h\|^2_{0,\OO} &= (p-p_h,\nabla\cdot \tbfu)= (p-p_h,\nabla\cdot \bfw_h) + (p-p_h,\nabla\cdot(\tbfu -\bfw_h)) = \\ 
& = a(\bfu -\bfu_h,\bfw_h) +  2\theta\nu(\ve(\bfw_h)\bfn, \bfuh -\bfh)_{\GD}
+ 2\theta\nu(\ve(\bfw_h)\bfn\cdot\bfn, \bfuh\cdot\bfn - g)_{\Gt} \\
& \quad - (\nabla(p-p_h),\tbfu-\bfw_h)\,.
\end{aligned}
\end{equation}
Next, using the quasi-uniformity of the mesh family and the stability and approximation properties of the Scott-Zhang interpolant allow to obtain
\begin{equation}\label{eq:p-estim-1}
\begin{aligned}
& \|p-p_h\|^2_{0,\OO} \\ 
\leq & \nu \|\ve(\bfu-\bfu_h)\|_{0,\OO} \|\ve(\bfw_h)\|_{0,\OO} \\ &  + 2\nu^{\frac12}\theta \sum_{E\in \EE_D} h^{\frac12}_E\,\|\ve(\bfw_h)\|_{0,E}\left(\frac{\nu^{\frac12}}{h_E^{\frac12}}\|\bfu -\bfuh\|_{0,E}\right) \\
&  +2\nu^{\frac12}\theta \sum_{E\in \EE_S} h^{\frac12}_E\|\ve(\bfw_h)\|_{0,E}\left(\frac{\nu^{\frac12}}{h_E^{\frac12}}\|\bfu\cdot\bfn -\bfuh\cdot\bfn\|_{0,E}\right) \\
 & + C_{SZ}^* h|p-p_h|_{1,\OO}\|\tbfu\|_{1,\OO} \\
 \leq &C_P\left({\nu} \|\ve(\bfu-\bfuh)\|^2_{0,\OO}  + \sum_{E\in \EE_D} \frac{{\nu}}{h_E} \| \bfu -\bfuh\|^2_{0,E}+  \sum_{E\in \EE_S} \frac{{\nu}}{h_E} \| \bfu\cdot\bfn-\bfuh\cdot\bfn\|^2_{0,E} +  \frac{h^2}{{\nu}}\, |p-p_h|^2_{1,\OO}\right)^{\frac12} \|\tbfu\|_{1,\OO}\,,\\
\end{aligned}
\end{equation}
with $C_P = \nu^{\frac12} C_{SZ}^*(1+4 \theta)$.
The results follows using Lemma \ref{inverse}, \eqref{eq2c}, and \eqref{cota1}:
\begin{equation}\label{eq:p-estim-2}
\begin{aligned}
\|p-p_h\|^2_{0,\OO} & \leq\,C_P\,C_E \,C^*\, h^k \{|\bfu|_{k+1,\OO} + |p|_{k,\OO}\} \,\|p-p_h\|_{0,\OO}\,.
\end{aligned}
\end{equation}
\end{proof}

\section{Numerical experiments} \label{S4}

Numerical experiments are carried out with FEniCS finite element software~\cite{logg2012}, and the scripts are available online \cite{Araya2023}.

\subsection{Example 1: 2D Cavity}
\revision{The value of $s_1$ is calculated from the explicit solution and also a nonhomogeneous Dirichlet boundary condition is used.} 
In this example, taken from \cite{UGF14}, we 
\fch{take $\Omega:=(-1,1)^2$ and the slip boundary condition is imposed on $y=-1$, while a Dirichlet boundary condition is enforced on the rest of the boundary}.
The exact solution of this problem is given by $\bfu:= (2y(1-x^2), -2x(1-y^2))$ and $p:=0$. For our computations we use $\PP_1$ for all the variables, while the viscosity is set to $\nu=1$.
\fch{The corresponding computed velocity field is depicted Figure~\ref{Fig2}.
Table~\ref{tableEx1} presents the approximation errors for pressure and velocity as well as the computed convergence rate, which are in good agreement with the theory, with a slight superconvergence for the pressure. Table~\ref{tab2} presents the error in $L^2$ norm on the slip condition on $\Gamma_S$, that, for this situation, does not differ significantly between the symmetric and skew-symmetric variants. However, it shows that the larger the Nitsche parameter $\gamma_0$ is, the smaller is the error on the slip condition, for both variants.}

\begin{figure}[H]
	\centering
	\includegraphics[width=10.0cm,height=10.0cm]{./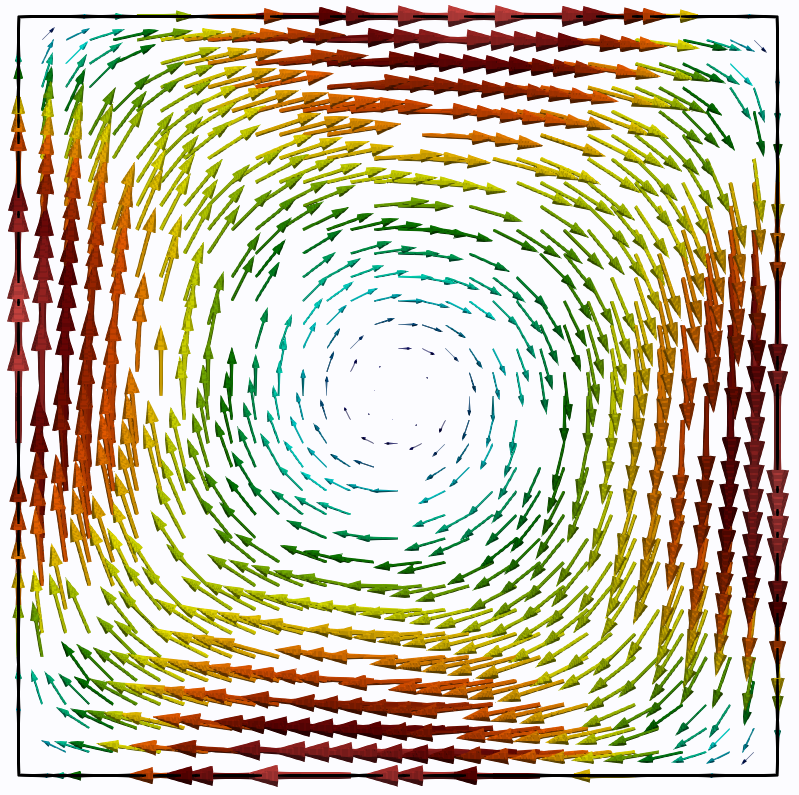}
	\caption{The computed velocity field\fch{ for Example 1}.} 
	\label{Fig2}
\end{figure}

\begin{table}[H]
\center
\begin{tabular}{c||c|c||c|c||c|c}
%
$h$ & $\|p-p_h\|_{0,\Omega}$ & order & $\|\bfu-\bfu_h\|_{0,\Omega}$ & order & $|\bfu-\bfu_h|_{1,\Omega}$ & order \\ \hline\hline
0.353553&0.256600&   ---&   0.055039&   ---&   1.058715&   ---\\ 
0.176777& 0.110749& 1.21& 0.017263& 1.67& 0.538051& 0.97\\ 
0.088388& 0.040998& 1.43& 0.004827& 1.83& 0.270114& 0.99\\ 
0.044194& 0.014566& 1.49& 0.001276& 1.91& 0.135161& 0.99\\ 
0.022097& 0.005134& 1.50& 0.000328& 1.96& 0.067574& 1.00\\ 

\end{tabular}
\caption{Approximation errors and convergence orders for each variable.}
\label{tableEx1}
\end{table}

\begin{table}[H]
\centering
\begin{tabular}{l|lll||lll}
                        & \multicolumn{3}{c||}{$\theta = -1$}                                                                                 & \multicolumn{3}{c}{$\theta = 1$}                                                                                  \\ \hline
\multicolumn{1}{c|}{$h$} & \multicolumn{1}{c|}{$\gamma_0=10^{-3}$} & \multicolumn{1}{c|}{$\gamma_0=1$} & \multicolumn{1}{c||}{$\gamma_0=10^3$} & \multicolumn{1}{c|}{$\gamma_0=10^{-3}$} & \multicolumn{1}{c|}{$\gamma_0=1$} & \multicolumn{1}{c}{$\gamma_0=10^3$} \\ 
\multicolumn{1}{l|}{0.353553}  & \multicolumn{1}{l|}{0.233603}                   & \multicolumn{1}{l|}{0.187756}             &            0.001221                          & \multicolumn{1}{l|}{0.182408}                   & \multicolumn{1}{l|}{0.158295}             &             0.001222                         \\ 
\multicolumn{1}{l|}{0.176777}  & \multicolumn{1}{l|}{0.043670}                   & \multicolumn{1}{l|}{0.035254}             &              0.000250                        & \multicolumn{1}{l|}{0.039551}                   & \multicolumn{1}{l|}{0.032317}             &              0.000250                       \\ 
\multicolumn{1}{l|}{0.088388}  & \multicolumn{1}{l|}{0.008092}                   & \multicolumn{1}{l|}{0.006591}             &              0.000050                        & \multicolumn{1}{l|}{0.007483}                   & \multicolumn{1}{l|}{0.006229}             &            0.000050                         \\ 
\multicolumn{1}{l|}{0.044194}  & \multicolumn{1}{l|}{0.001524}                   & \multicolumn{1}{l|}{0.001257}             &               0.000010                       & \multicolumn{1}{l|}{0.001419}                   & \multicolumn{1}{l|}{0.001235}             &                0.000010                      \\ 
\multicolumn{1}{l|}{0.022097}  & \multicolumn{1}{l|}{0.000297}                   & \multicolumn{1}{l|}{0.000250}             &             0.000002                         & \multicolumn{1}{l|}{0.000280}                   & \multicolumn{1}{l|}{0.000256}             &               0.000002                       \\ 
\end{tabular}
\caption{Computations of $\|\bfu_h\cdot\bfn\|_{0,\Gamma_S}$ for different values of $\theta$ and $\gamma_0$.} \label{tab2}
\end{table}

%
%
%

\clearpage

\subsection{Example 2: 2D Naca 0012}
In this case we use the standard Naca 0012 configuration \fch{depicted in Figure~\ref{Fig5}}. 
The boundary conditions are $\bfu = (51.4814,0)$ on all the box boundaries and $\bfu\cdot\bfn=0$ and $\sigma(\bfu,p)\bfn\cdot \bft=0$ on the surface of the Naca domain. We use a mesh with 8,808 elements with polynomials of order 1, and { we set $\theta= -1$, $\gamma_0=10$}. \fch{The predicted pressure and velocity are depicted Figure~\ref{Fig7} and Figure~\ref{Fig8}, which show the method behaves as expected on this classical example, notably for the enforcement on the slip condition on the wing.}

\begin{figure}[!t]
	\centering
	\includegraphics[width=10.0cm]{./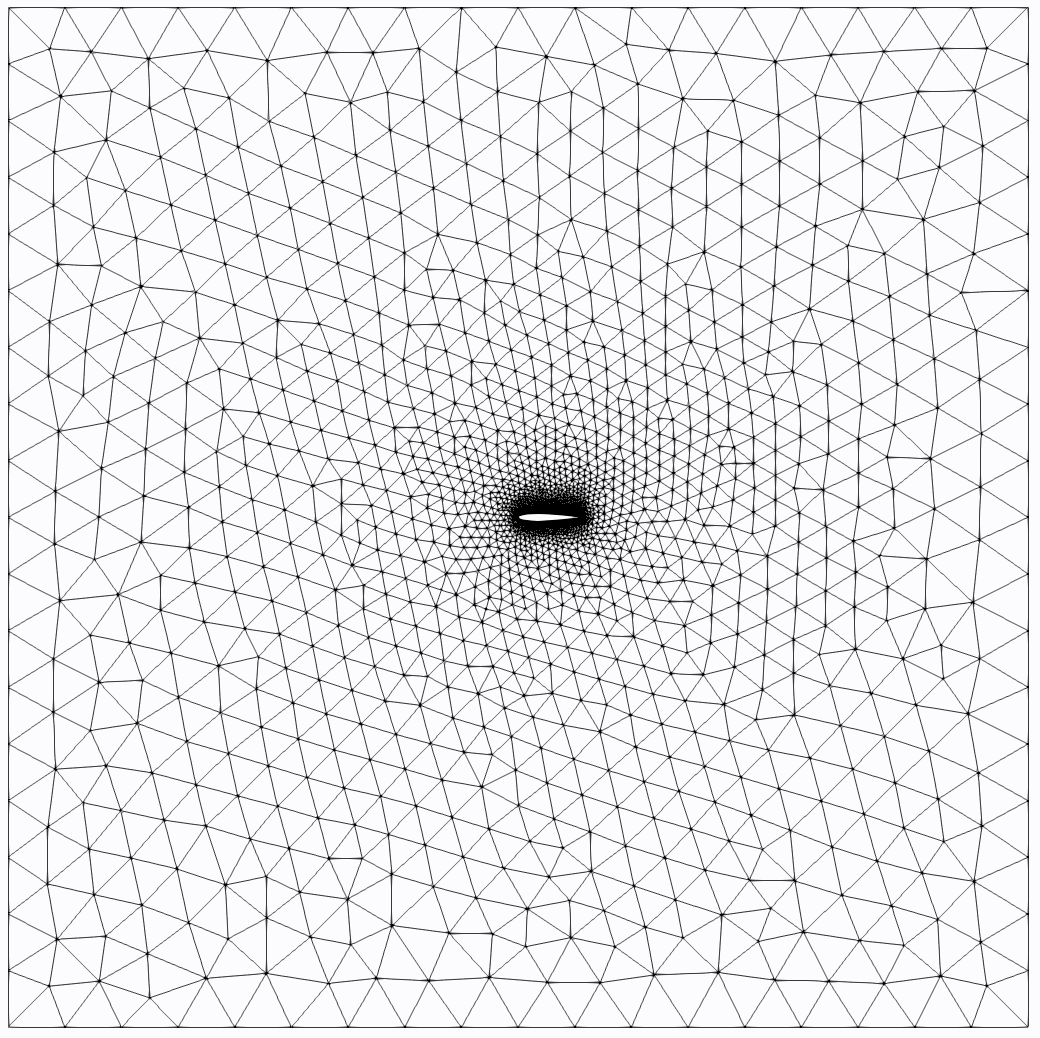}
	\caption{Computational mesh used for the Naca problem.} 
	\label{Fig5}
\end{figure}

\begin{figure}[H]
	\centering
	\includegraphics[width=8.0cm,height=6.cm]{./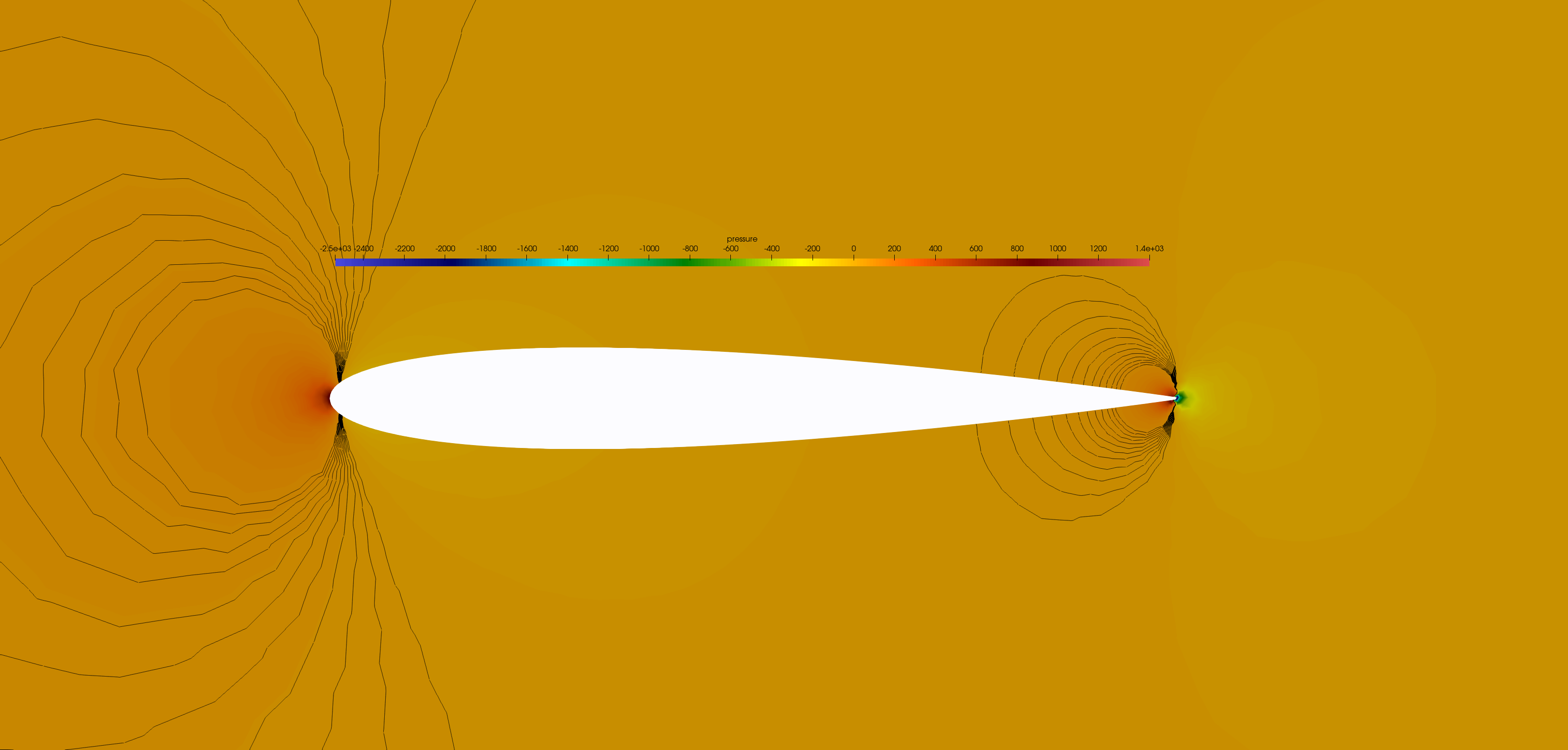}
	\includegraphics[width=8.0cm,height=6.cm]{./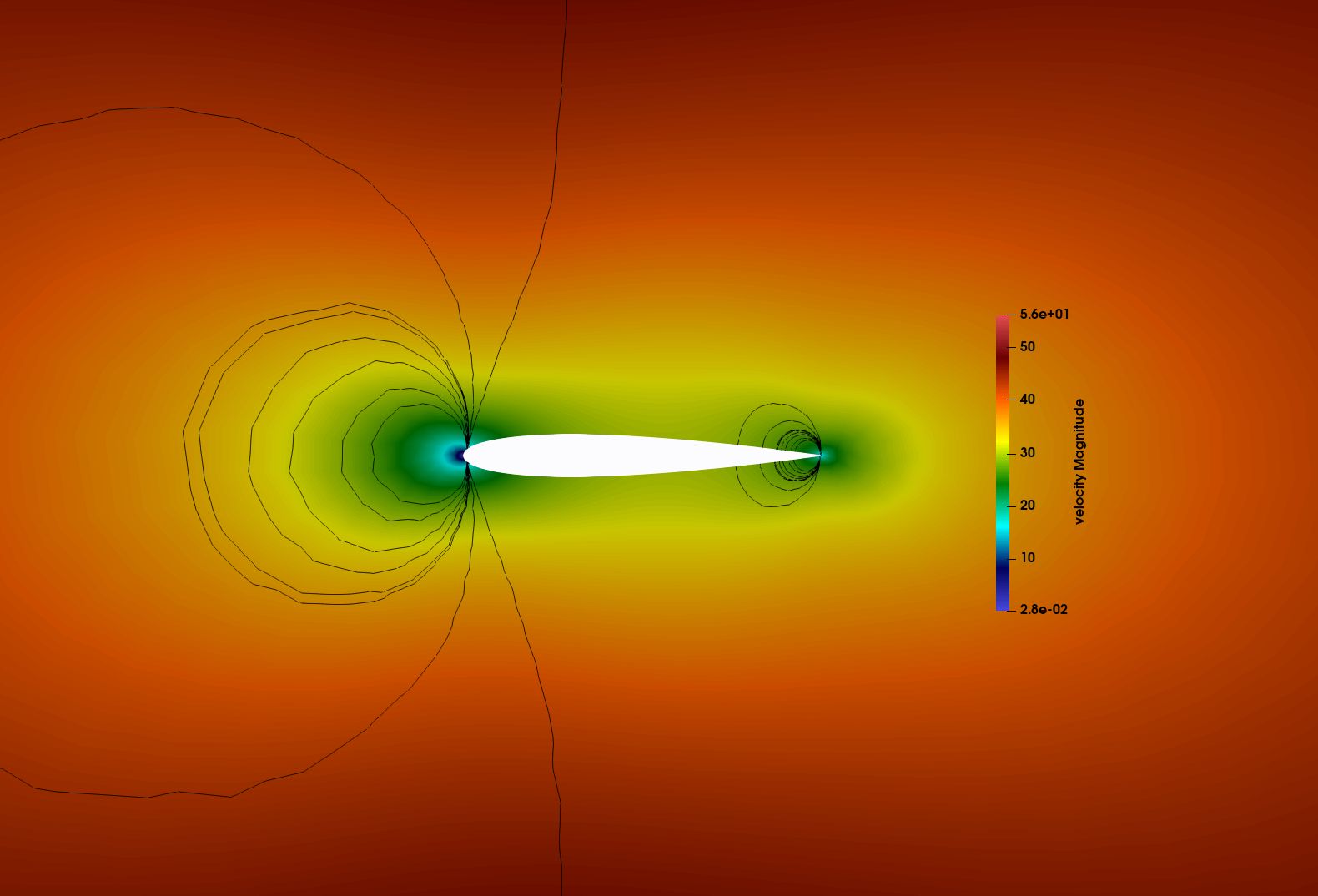}
	\caption{Isovalues of the pressure (left) and velocity magnitude (right).} 
	\label{Fig7}
\end{figure}

\begin{figure}[H]
	\centering
 \includegraphics[width=12.0cm]{./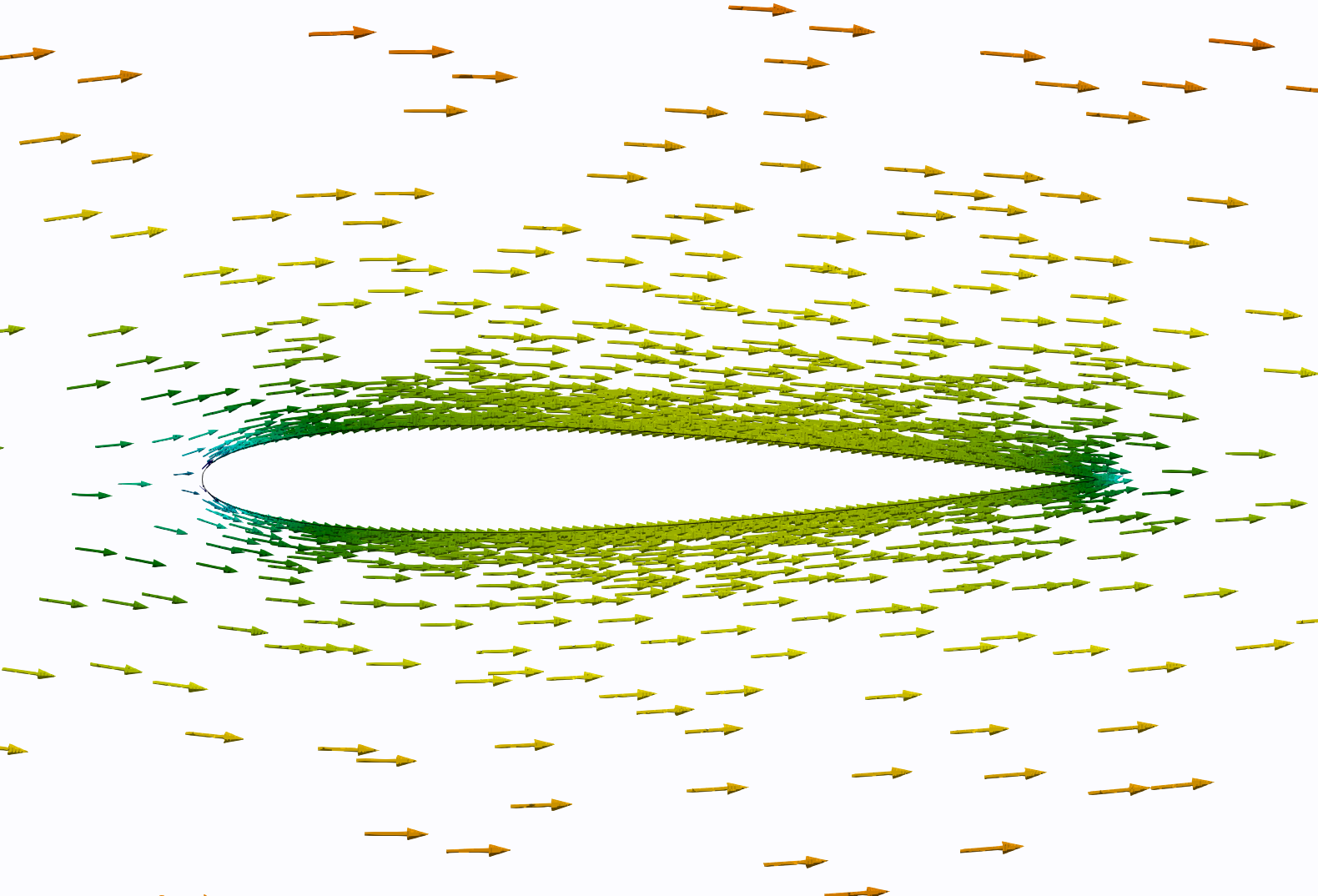}
	\caption{Zoom, close to the Naca wing, of the velocity field.} 
	\label{Fig8}
\end{figure}

\subsection{Example 3: 3D Cylinder}
The last example is based on a standard three-dimensional CFD benchmark: the cylinder problem. The geometrical settings of the domain are given in \fch{Figure \ref{Fig9}}. 
\begin{figure}[H]
	\centering
	\includegraphics[width=15.0cm,height=7.5cm]{./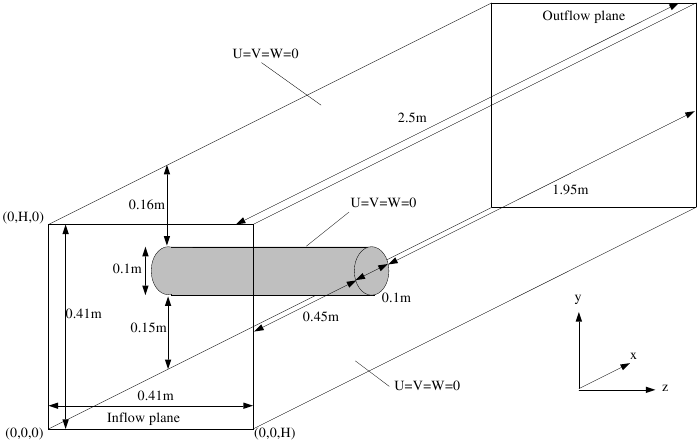}
	\caption{\fch{Cylinder problem.} Domain and boundary conditions.} 
	\label{Fig9}
\end{figure}
In this case $H=0.41\,m$, no-slip boundary conditions are imposed on all the lateral walls of the box, while do-nothing boundary conditions are imposed at the outflow plane. On the surface of the cylinder we impose the conditions $\bfu\cdot\bfn =0$ and $\displaystyle\sum_{i=1}^2\sigma(\bfu,p)\bfn\cdot \bft_i=0$ . Finally, the inflow condition is given by
\[
\bfu_D := \left(\dfrac{16\, U_m\, yz\,(H-y)(H-z)}{H^4}, 0,0\right)^T,
\] 
with $U_m:= 0.45\,m/s$. We use the viscosity $\nu:= 10^{-3}\,m^2/s$. \fch{The mesh is depicted Figure~\ref{Fig10} and { we set $\theta=-1$, $\gamma_0=10$}. The numerical solutions for the pressure and velocity fields are shown in Figure~\ref{Fig11}, Figure~\ref{Fig11b}, Figure~\ref{Fig11c} and Figure~\ref{Fig12}. 
The results illustrate the good behavior of the method
also on this more complex example.}
\begin{figure}[H]
	\centering
	\includegraphics[width=14.0cm,height=4.45cm]{./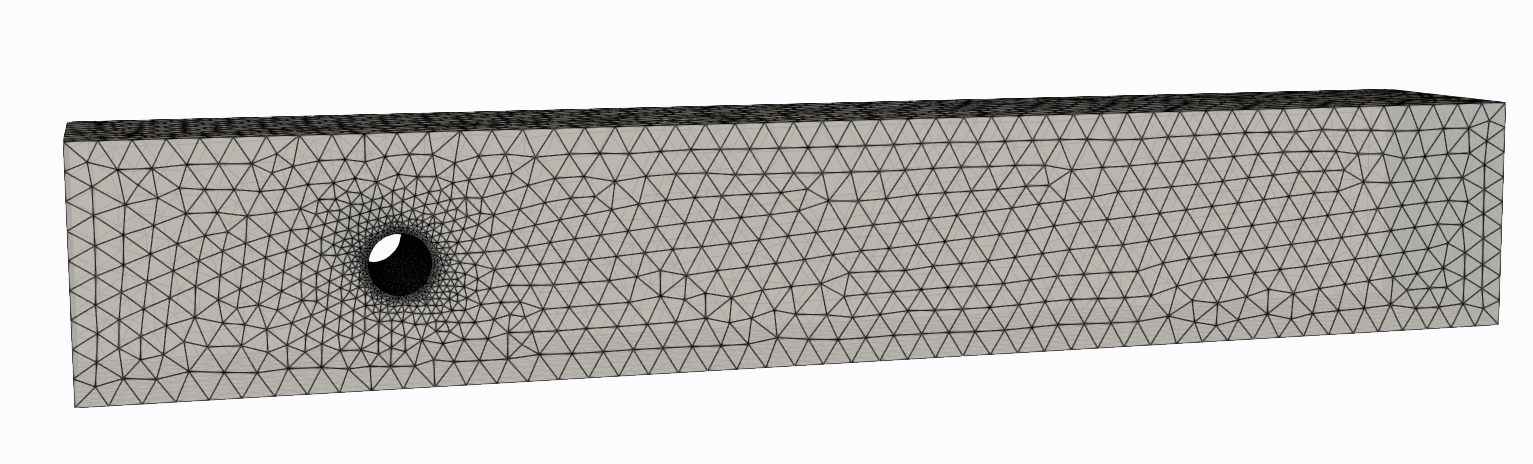}
	\caption{\fch{Cylinder problem.} \alfonso{Surface view of the 
 computational mesh.} }
	\label{Fig10}
\end{figure}

\begin{figure}[H]
	\centering
	\includegraphics[width=12.0cm,height=6.5cm]{./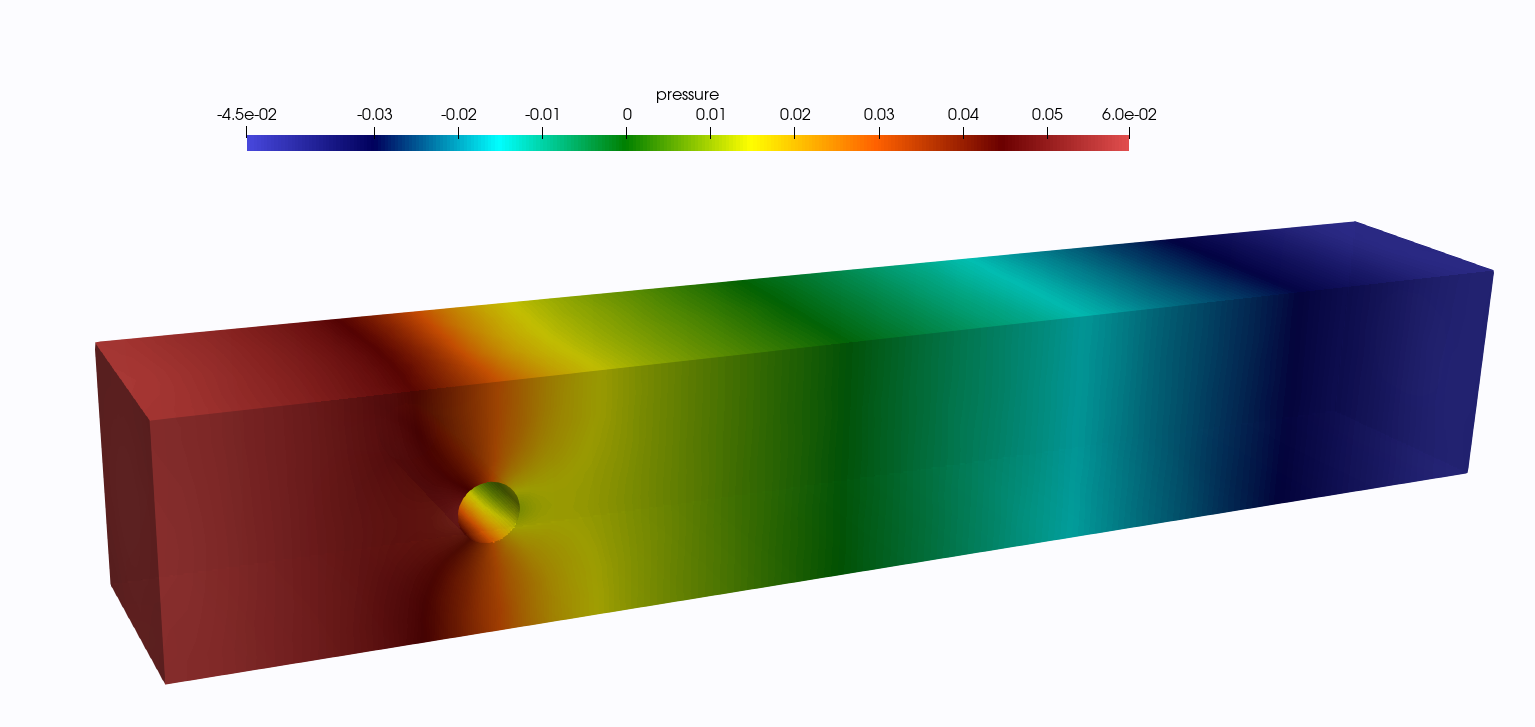}
 \caption{
 \fch{Cylinder problem.} Isovalues of the pressure.} 
	\label{Fig11}
\end{figure}

\begin{figure}[H]
	\centering
	\includegraphics[width=14.0cm,height=6.5cm]{./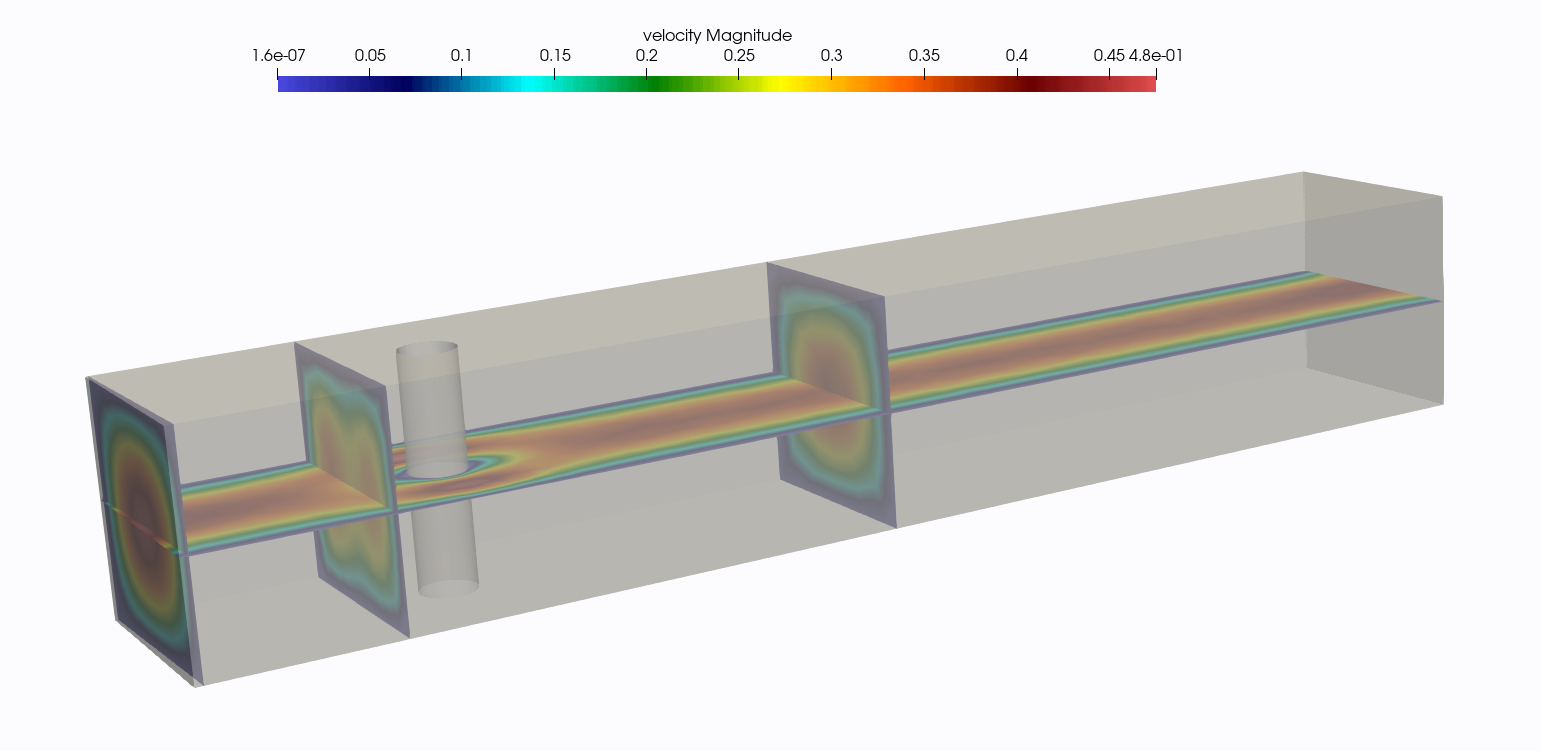}
 \caption{
 \fch{Cylinder problem. Velocity magnitude (top view).}} 
	\label{Fig11b}
\end{figure}

\begin{figure}[H]
	\centering
 \includegraphics[width=14.0cm,height=6.cm]{./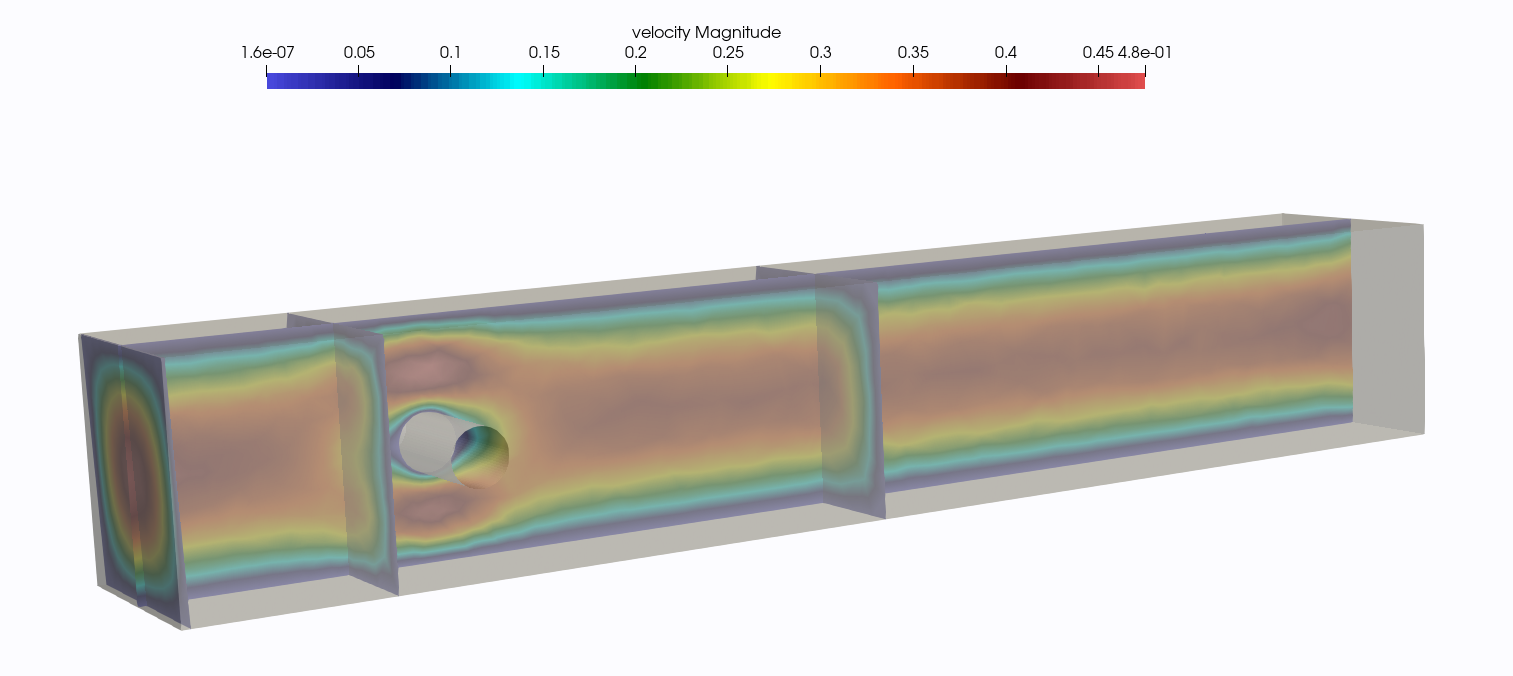}
 \caption{
 \fch{Cylinder problem. Velocity magnitude (side view).}} 
	\label{Fig11c}
\end{figure}

\begin{figure}[H]
	\centering
	\includegraphics[width=11.0cm,height=7.cm]{./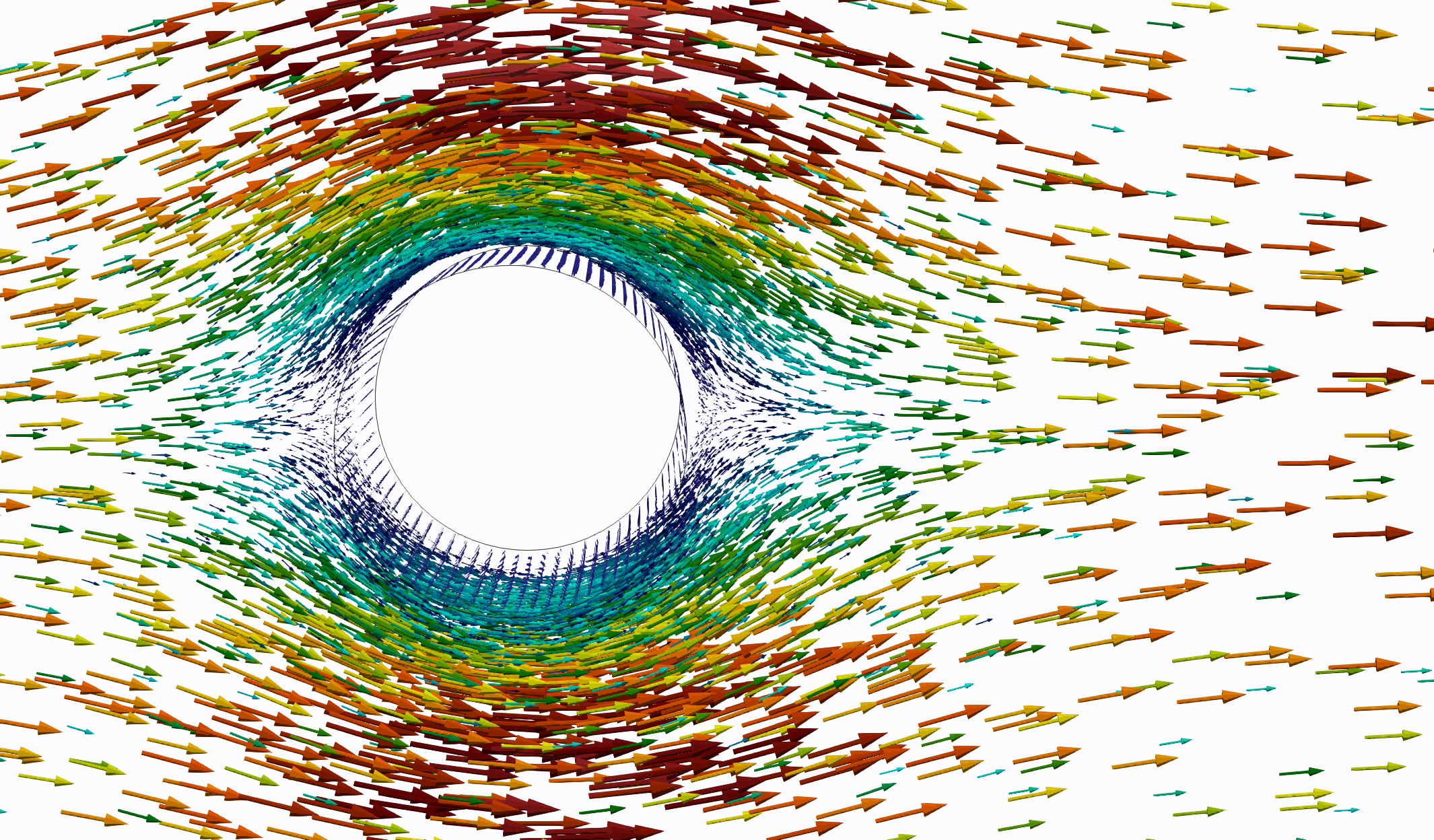}
	\caption{\fch{Cylinder problem.} Zoom of the velocity field (velocity vectors at
 a selection of points) close to the cylinder. The length and the color of the vector is scaled according to velocity magnitude (see the legend in Figure \ref{Fig11c}.} 
	\label{Fig12}
\end{figure}

\section*{Acknowledgements}

R.A. was partially supported by  ANID-Chile through the projects: Centro de Modelamiento Matem\'atico (FB210005) of the PIA Program: Concurso Apoyo a Centros Cient\'ificos y Tecnol\'ogicos de Excelencia con Financiamiento Basal, and Fondecyt Regular No 1211649. 
FC is grateful of the Center for Mathematical Modeling grant FB20005.

\medskip



\end{document}